\title{Klt varieties with conjecturally minimal volume  }
\author{Burt Totaro}
\date{  }
\def\N{\text{\bf N}}
\def\Q{\text{\bf Q}}
\def\R{\text{\bf R}}
\def\C{\text{\bf C}}
\def\P{\text{\bf P}}
\DeclareMathOperator{\vol}{vol}
\DeclareMathOperator{\mult}{mult}
\DeclareMathOperator{\discrep}{discrep}
\DeclareMathOperator{\lct}{lct}
\DeclareMathOperator{\glct}{glct}
\DeclareMathOperator{\sing}{sing}
\begin{document}
\maketitle
\newtheorem{theorem}{Theorem}[section]
\newtheorem{corollary}[theorem]{Corollary}
\newtheorem{lemma}[theorem]{Lemma}

\theoremstyle{definition}
\newtheorem{definition}[theorem]{Definition}
\newtheorem{example}[theorem]{Example}
\newtheorem{conjecture}[theorem]{Conjecture}
\newtheorem{question}[theorem]{Question}

\theoremstyle{remark}
\newtheorem{remark}[theorem]{Remark}

We construct mildly singular (klt) complex projective varieties
with ample canonical class and the smallest known volume.
We also find exceptional klt Fano varieties with the smallest known
volume. In fact,
we conjecture that our examples
have the minimum volume in every dimension $n$,
and we give low-dimensional evidence to support this. Crudely, the volume
is about $1/2^{2^n}$.
These varieties improve on the examples
by Chengxi Wang and me \cite[Theorem 0.1]{TW}. Those examples had
roughly the right asymptotics, but they were known not
to be optimal.

By definition, the {\it volume }of a normal projective variety $X$
measures the asymptotic growth of the plurigenera,
$$\vol(X):=\lim_{m\to\infty} h^0(X,mK_X)/(m^n/n!).$$
This is equal to the intersection number $K_X^n$
if the canonical class $K_X$ is ample; it is the basic discrete
invariant for a variety of general type, analogous
to the genus of a curve. (When $X$ is klt and $K_X$ is ample,
this is literally
the volume of $X$ with its unique K\"ahler-Einstein metric of Ricci curvature
$-1$, up to a constant factor \cite[Theorem C]{EGZ}.)
By Hacon-M\textsuperscript{c}Kernan-Xu, there is a positive
lower bound for the volumes of all klt complex projective varieties
with ample canonical class, depending only on the dimension
\cite[Theorem 1.3]{HMXacc}. Finding an explicit bound
is a central problem in the classification of algebraic varieties,
wide open in dimensions
at least 3. (Alexeev and Liu gave a bound in dimension 2
\cite[section 10]{AL}.)
We are showing that the bound must go to zero
extremely fast, and conjecturally we give the exact bound
in each dimension.
In related work,
Koll\'ar found a klt {\it pair }with ample canonical
class and standard coefficients (described in section \ref{sectample})
that conjecturally has minimal volume
among such pairs
\cite{Kollarlog}, \cite[Introduction]{HMXbir}.
If we restrict to varieties with milder singularities,
Esser, Wang, and I constructed varieties in several classes
with small volume, such as smooth varieties of general type
or terminal Fano varieties \cite{ETW}.

The examples here are hypersurfaces in weighted projective spaces.
In contrast to most previous work, including
the examples by Wang and me \cite{TW},
the new hypersurfaces are not quasi-smooth. As a result, the
singularities are not always quotient singularities, and proving
that they are klt is more subtle. In dimension 2,
our example is Alexeev-Liu's klt surface with ample canonical
class and volume $1/48983$ \cite[Theorem 1.4]{AL}. Their construction
was different, but we find that their example
is in fact a non-quasi-smooth hypersurface, namely
$X_{438}\subset \P^3(219,146,61,11)$.

We also consider an analogous problem for klt Fano varieties.
The anticanonical volume of a klt Fano variety can be arbitrarily
small in a given dimension. (For example, for any positive integer $a$,
the weighted projective plane $Y=\P^2(2a+1,2a,2a-1)$
is a klt del Pezzo surface with $\vol(-K_Y)=18a/(4a^2-1)$.)
However, Birkar showed
that exceptional klt Fano varieties form a bounded family
in each dimension,
and in particular there is a positive lower bound for their
volumes \cite[Theorem 1.3]{Birkarcomp}. (By definition, a klt
Fano variety $X$ is {\it exceptional }if the pair $(X,D)$ is klt
for every effective $\Q$-divisor $D$ that is $\Q$-linearly equivalent
to $-K_X$. Equivalently, the {\it global log canonical threshold }(or
{\it $\alpha$-invariant}) of $X$ is greater than 1. Non-exceptional Fano
varieties can be analyzed in terms of lower-dimensional Fano pairs,
and so exceptional Fano varieties can be considered
the core of the classification problem for Fano varieties.)
By the recent proof of the Yau-Tian-Donaldson conjecture
for singular varieties, every exceptional klt Fano variety
has a K\"ahler-Einstein metric \cite[Theorem 1.4]{OS},
\cite[Theorem 1.6]{LXZ}.

We construct here what we conjecture to be
the exceptional klt Fano variety of minimum volume
in every dimension $n$. Again, the volume is roughly $1/2^{2^n}$.
It seems that the only known examples
of exceptional klt Fano varieties in high dimensions are
the quasi-smooth hypersurfaces
found by Johnson and Koll\'ar
\cite[Proposition 3.3]{JK4}.
We extend their argument to prove exceptionality
of our examples. In fact, we compute the global log canonical threshold
exactly. Crudely, it is about $2^{2^n}$, hence greater than 1 as we want.
The method, based on weighted multiplicities
in place of the usual multiplicity of a variety at a point,
should be useful for many other Fano varieties.

We give low-dimensional evidence
that our exceptional Fano varieties have the minimum
volume in each dimension.
As in the case of ample
canonical class, seeking optimal examples among all exceptional
Fano varieties
forces us to consider
non-quasi-smooth hypersurfaces in weighted projective space.
In dimension 2, our example (apparently new)
is the exceptional klt del Pezzo surface $X_{354}\subset \P^3(177,
118, 49, 11)$, for which $\vol(-K_X)=1/31801$.
This is smaller than the volume of any exceptional klt del Pezzo surface
with Picard number 1, by Lacini's classification of those surfaces
\cite{Lacini,KM}. The surface here has Picard number 2.

Finally, in every even dimension,
we construct klt varieties with ample canonical class,
and also klt Fano varieties, with the largest known bottom weight
(Theorems \ref{fanoglct} and \ref{kampleglct}).
The {\it bottom weight }means the smallest positive integer $m$
such that $H^0(X,mK_X)\neq 0$ in the $K$-ample case, or
$H^0(X,-mK_X)\neq 0$ in the Fano case. The global log canonical
threshold should be extremely large in these examples,
perhaps even maximal (Question \ref{glctfano}).

This work was supported by NSF grant DMS-2054553.
Thanks to Louis Esser, Yuchen Liu, Joaqu\'in Moraga,
and Chengxi Wang for their suggestions. Liu spotted a misuse
of Blum-Liu-Xu's work \cite{BLX} in the first version of this paper.

\section{Background}

Our examples use {\it Sylvester's sequence}, defined by
$s_0=2$ and $s_{n+1}=s_n(s_n-1)+1$. The sequence begins
$2,3,7,43,1807,\ldots$. We have
$s_{n+1}=s_0\cdots s_n+1$, and hence the numbers in Sylvester's
sequence are pairwise coprime. More important for applications
to extremal problems is that the sums
of the reciprocals of these numbers converge quickly to 1:
$$\frac{1}{s_0}+\cdots+\frac{1}{s_{n-1}}=1-\frac{1}{s_n-1}.$$
Here $s_n$ grows doubly exponentially in $n$, and so this
sum is very close to 1. In fact, for every positive integer $n$,
Soundararajan showed
that the sum above is the closest to 1 of all sums
of $n$ unit fractions that are less than 1 \cite{Soundararajan}.

There is a constant $c\doteq 1.264$ such that $s_i$ is the closest
integer to $c^{2^{i+1}}$ for all $i\geq 0$ \cite[equations
2.87 and 2.89]{GK}. For example, it follows that $s_i>2^{2^{i-1}}$
for all $i\geq 0$. We write $a_i\sim b_i$ to mean that two sequences
of positive real numbers are asymptotic, meaning that $a_i/b_i$ converges
to 1 as $i$ goes to infinity.

We consider algebraic varieties
over the complex numbers, although some of
the paper would work in any characteristic.
A reference for the singularities of the minimal model program,
such as Kawamata log terminal (klt) and log canonical (lc),
is \cite{Kollarsings}. 
We often use without comment
that the klt or lc properties of a pair $(X,\Delta)$ are unchanged
under finite coverings $f$ of normal varieties
with $f$ \'etale in codimension 1
\cite[Corollary 2.43]{Kollarsings}. (This would not be true
for other singularity classes such as terminal or canonical.)
As a result, we do not need to distinguish between the klt or lc
property for a normal Deligne-Mumford stack
and for its associated coarse moduli space,
provided that the stabilizer groups are trivial in codimension 1.

For an effective
$\Q$-Cartier $\Q$-divisor $D$ on a klt variety $X$, the {\it log
canonical threshold} $\,\lct(X,D)$ is the supremum of the real numbers
$\lambda$ such that the pair $(X,\lambda D)$ is lc. For a klt Fano variety $X$,
the {\it global log canonical threshold }(or {\it $\alpha$-invariant})
$\glct(X)$ is the supremum of the real numbers $\lambda$ such that
$(X,\lambda D)$ is lc for every effective $\Q$-divisor $D$ with
$D\sim_{\Q}-K_X$. It follows that a Fano variety with global
log canonical threshold greater than 1 must be exceptional.
In fact, $\glct(X)>1$ is equivalent to exceptionality,
by Birkar \cite[Theorem 1.7]{Birkarsings}.

For positive integers $a_0,\ldots,a_n$, the weighted
projective space $Y=\P(a_0,\ldots,a_n)$ means the quotient
variety $(A^{n+1}-0)/G_m$ over $\C$, where the multiplicative group
$G_m$ acts by $t(x_0,\ldots,x_n)=(t^{a_0}x_0,\ldots,t^{a_n}x_n)$
\cite[section 6]{Iano-Fletcher}.
Starting in section \ref{estimating}, we switch to viewing
weighted projective space as the quotient stack
$\mathcal{Y}=[(A^{n+1}-0)/G_m]$. Here $\mathcal{Y}$
is a smooth Deligne-Mumford stack with
canonical class
$K_{\mathcal{Y}}=O_{\mathcal{Y}}(-\sum a_j)$.
We say that $Y$ is {\it well-formed }if
the stack $\mathcal{Y}$ has trivial stabilizer in codimension 1,
or equivalently if $\gcd(a_0,\ldots,\widehat{a_j},\ldots,a_n)=1$
for each $j$. In the well-formed case, the canonical class
of the variety $Y$ is given by the same formula
as for the stack.

Here $O(1)$ is a line bundle on the stack $\mathcal{Y}$.
On the variety $Y$, with $Y$ well-formed, $O_Y(1)$
is only the reflexive sheaf associated to a Weil divisor, in general;
the divisor class $O_Y(m)$ is Cartier if and only if $m$ is a multiple
of every weight $a_j$.
The intersection number $\int_{\mathcal{Y}} c_1(O(1))^n$
is $1/(a_0\cdots a_n)$.
More generally, for an integral closed substack $Z$ of dimension $r$
in $\mathcal{Y}$, its {\it degree }means $\int_Z c_1(O(1))^r$.

Let $Y$ be a well-formed weighted projective space. A closed subvariety
$X$ of $Y$ is called {\it quasi-smooth }if its affine cone in $A^{n+1}$
is smooth outside the origin. (Equivalently, the inverse image
of $X$ in the stack $\mathcal{Y}$ is smooth over $\C$.)
In particular,
a quasi-smooth subvariety has only cyclic quotient singularities
and hence is klt. Also, $X$
is {\it well-formed }if $Y$ is well-formed and the codimension
of $X\cap Y^{\sing}$ in $X$ is at least 2. (For a well-formed
weighted projective space $Y$,
the singular locus of the variety $Y$ corresponds to the locus
where the stack $\mathcal{Y}$ has nontrivial stabilizer.)

For a well-formed normal hypersurface $X$ of degree $d$ in a weighted
projective space $Y$, we have $K_X=O_X(d-\sum a_j)$. (We are not assuming
quasi-smoothness of $X$.) Indeed,
the canonical class of a normal variety is defined as a Weil divisor
up to linear equivalence, and so we are free to delete closed subsets
of codimension at least 2 from $X$ in order to prove this formula.
So we can delete the singular locus of $X$ and the singular locus
of $Y$ from $X$ and $Y$, and then $K_X=O_X(d-\sum a_j)$ is the usual
adjunction formula for a smooth hypersurface in a smooth variety.

Note an ambiguity
in the notion of ``degree'': if $X$ is a hypersurface of degree $d$
in $\mathcal{Y}$,
meaning that it is defined by a weighted-homogeneous polynomial
of degree $d$,
then its degree as a substack of $\mathcal{Y}$ is $X\cdot c_1(O(1))^{n-1}=
d/(a_0\cdots a_n)$.

\section{Klt varieties with ample canonical class}
\label{sectample}

\begin{theorem}
\label{ample}
For each integer $n$ at least 2, let 
$$a_{n+1}=\begin{cases} \frac{1}{4}(s_n^2-s_n+2)&\text{if $n$ is even}\\
\frac{1}{4}(s_n^2-3s_n+4)&\text{if $n$ is odd.}
\end{cases}$$
Let $a_n=(s_n-2)a_{n+1}+(s_n-1)$, $x=1+a_n+a_{n+1}$,
$d=(s_n-1)x$, and $a_i=d/s_i$ for $0\leq i\leq n-1$.
Then there is a hypersurface $X$ of degree $d$ in
$\P^{n+1}(a_0,\ldots,a_{n+1})$ that is well-formed and klt,
with $K_X=O_X(1)$. It has volume
$$\frac{1}{(s_n-1)^{n-2}x^{n-1}a_na_{n+1}},$$
which is asymptotic to $2^{2n+2}/s_n^{4n}$.
In particular, this is less
than $1/2^{2^n}$.
\end{theorem}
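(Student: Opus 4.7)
The plan is to exhibit an explicit defining polynomial $f$ of degree $d$, derive $K_X = O_X(1)$ and the volume formula from Sylvester's identity, check well-formedness from the coprimality of the weights, and verify klt by a local analysis at the coordinate strata where quasi-smoothness fails. The asymptotic estimate is then routine. For $0 \le i \le n-1$ the monomial $x_i^{s_i}$ has degree $s_i a_i = d$, so we begin with $\sum_{i=0}^{n-1} c_i x_i^{s_i}$ and adjoin suitable degree-$d$ monomials in $x_{n-1}, x_n, x_{n+1}$, together with at least one low-degree cross term such as $x_{n-1} x_n x_{n+1}^c$ whose existence comes from a short Diophantine check (the $n=2$ case exhibits the monomials $x_2^7 x_3$ and $x_1 x_2 x_3^{21}$). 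Adjunction for a well-formed normal hypersurface gives $K_X = O_X(d - \sum_j a_j)$, and Sylvester's identity $\sum_{i<n} 1/s_i = 1 - 1/(s_n-1)$ yields $\sum_{i<n} a_i = d - x$; combined with $a_n + a_{n+1} = x - 1$ by construction, $\sum_j a_j = d - 1$, hence $K_X = O_X(1)$. The stack formula $\vol(X) = d/\prod_j a_j$, together with $\prod_{i<n} a_i = d^n/(s_n - 1)$ and $d = (s_n - 1) x$, then produces the stated volume. Well-formedness reduces to a finite list of $\gcd$-conditions that follow from pairwise coprimality of $s_0, \ldots, s_{n-1}$ and the parity-specific definitions of $a_n, a_{n+1}$, together with a dimension count showing that $X \cap Y^{\mathrm{sing}}$ has codimension $\ge 2$ in $X$.

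The main obstacle is klt, since $X$ is not quasi-smooth. Stratify $Y$ by the vanishing loci of the $x_i$. On any stratum where some $x_i$ with $i \le n-1$ is nonzero, the partial derivative $\partial f/\partial x_i$ contains the nonvanishing term $s_i c_i x_i^{s_i - 1}$, so $X$ is locally quasi-smooth with only cyclic quotient singularities there, hence klt. The remaining problematic locus is the closed coordinate subvariety $\{x_0 = \cdots = x_{n-2} = 0\}$, which is $0$- or $1$-dimensional and meets $X$ in finitely many points such as $[0:\cdots:0:1]$. At each such point we pass to the étale-in-codimension-one cyclic cover and obtain a local hypersurface in $A^{n+1}$ whose equation combines $\sum_{i<n} x_i^{s_i}$ with cross terms coming from the adjoined monomials. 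The key point is that the low-degree cross term (e.g.\ $x_{n-1} x_n$ after setting $x_{n+1}=1$) contributes a non-degenerate quadratic piece, so the local singularity has small multiplicity and is of a klt type; a weighted blowup with weights adapted to the Sylvester exponents and the remaining cross-term degrees computes the discrepancy of the exceptional divisor and confirms it exceeds $-1$. The klt property then descends from the cover to $X$. Pinning down the local analytic type from the congruence data of the $a_j$, so that the discrepancy calculation really comes out positive, is the delicate step.

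For the asymptotic, $s_n \sim c^{2^{n+1}}$ gives $a_{n+1} \sim s_n^2/4$, $a_n \sim s_n^3/4$, $x \sim s_n^3/4$, and $d \sim s_n^4/4$; substituting into the volume formula yields $\vol(X) \sim 4^{n+1}/s_n^{4n} = 2^{2n+2}/s_n^{4n}$. The absolute bound $\vol(X) < 1/2^{2^n}$ follows from $s_n > 2^{2^{n-1}}$: then $s_n^{4n} > 2^{n \cdot 2^{n+1}}$, which dominates $2^{2n + 2 + 2^n}$ for every $n \ge 2$.
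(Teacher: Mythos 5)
Your overall outline — explicit monomials, adjunction, volume formula, gcd check for well-formedness, quasi-smooth off a small locus, then a local analysis at the bad point(s) — matches the paper's architecture, and the arithmetic ($K_X=O_X(1)$, the volume formula, the asymptotics, the bound by $1/2^{2^n}$) is all correct. But the klt step, which you yourself flag as ``the delicate step,'' has a genuine gap, and there the proposal goes off the rails.

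First, the monomials are not the right ones. You propose cross terms supported on $x_{n-1},x_n,x_{n+1}$; the paper's extra monomial is $x_1\cdots x_n x_{n+1}^b$ (for $n$ even) or $x_1\cdots x_{n-1}x_n^2 x_{n+1}^b$ (for $n$ odd), involving \emph{all} of $x_1,\ldots,x_n$. Your $n=2$ check $x_1x_2x_3^{21}$ happens to coincide with the paper's because for $n=2$ the two descriptions agree, but they diverge for $n\geq 3$; there is in general no degree-$d$ monomial supported only on $x_{n-1},x_n,x_{n+1}$ with small enough exponents to do what you want. Second, and more seriously, the claim that the cross term ``contributes a non-degenerate quadratic piece'' after setting $x_{n+1}=1$ is simply false for $n\geq 3$: the dehomogenized cross term is $x_1\cdots x_n$ (degree $n$) or $x_1\cdots x_{n-1}x_n^2$ (degree $n+1$), so the quadratic part of the local equation at $[0,\ldots,0,1]$ is just $x_0^2$ — a rank-one form. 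The singularity is highly degenerate and there is no ordinary-double-point-style shortcut. Your $n=2$ intuition ($x_0^2+x_1x_2$ full rank) does not extend. Third, you defer the actual discrepancy computation — ``a weighted blowup \ldots confirms it exceeds $-1$'' — without specifying the weights or doing the computation, while acknowledging this is exactly the hard part.

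The paper closes the gap by invoking the Ishii--Prokhorov Newton polyhedron criterion (Theorem \ref{newton}): the general linear combination of a set $I$ of monomials cuts out a hypersurface in $A^{n+1}$ with canonical singularities if and only if $(1,\ldots,1)$ lies in the interior of the Newton polyhedron of $I$ (plus a normality hypothesis, and the ``only if'' needs no linear monomials). This converts the klt question into a convexity computation, and the paper verifies it by exhibiting the explicit point $(5/12)(2,0,\ldots,0)+(1/6)(0,3,0,\ldots,0)+(5/12)(0,1,\ldots,1)$ in the interior (and similarly for $n$ odd). This criterion is in fact the precise ``weighted-blowup with the right weights'' fact you are reaching for, but made into a checkable statement; without it (or a hands-on weighted-blowup computation with chosen weights and a verified discrepancy), your proof of klt is incomplete. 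One smaller point: your stratification argument ``$\partial f/\partial x_i$ contains the term $s_ic_ix_i^{s_i-1}$, hence quasi-smooth where $x_i\neq 0$'' is too quick, since $\partial f/\partial x_i$ also receives contributions from the cross term; the paper instead observes that the base locus of $|O(d)|$ lies in the last two coordinate points and applies Bertini, which is cleaner and avoids this issue.
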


Explicitly, define $X$ by the equation, for $n\geq 2$ even:
$$0=x_0^2+x_1^3+\cdots
+x_{n-1}^{s_{n-1}}+x_n^{s_n}x_{n+1}+x_1\cdots x_nx_{n+1}^b,$$
where $b=(s_n^2-2s_n+7)/2$. For $n\geq 3$ odd, define $X$ by
$$0=x_0^2+x_1^3+\cdots
+x_{n-1}^{s_{n-1}}+x_n^{s_n}x_{n+1}+x_1\cdots x_{n-1}x_n^2x_{n+1}^b,$$
where now $b=(s_n^2-4s_n+11)/2$. Since the number of monomials is equal
to the number of variables, any linear combination
of these monomials with all coefficients nonzero
defines an isomorphic variety, by scaling
the variables. One can check
that the monomials shown are all the monomials of degree $d$,
and hence that an open subset
of all hypersurfaces of degree $d$ are isomorphic to the one indicated;
but we will not need those facts.

Note that $X$ is not quasi-smooth.

\begin{conjecture}
For each integer $n$ at least 2,
the variety in Theorem \ref{ample}
has the minimum volume among all klt projective $n$-folds with ample
canonical class.
\end{conjecture}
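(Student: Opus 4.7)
The plan is to prove the conjecture in low dimensions and to set up a framework that would in principle settle higher dimensions; since a complete proof in all $n$ appears to require effective boundedness results that are open in dimension $\geq 3$, the plan is intrinsically partial.

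For $n=2$, the conjecture follows at once from Alexeev--Liu's sharp lower bound $\vol(X)\geq 1/48983$ for klt projective surfaces with ample canonical class \cite[Theorem 1.4]{AL}, combined with the identification recorded in the introduction that their minimizer is exactly the hypersurface $X_{438}\subset\P^3(219,146,61,11)$ produced by Theorem~\ref{ample} in the case $n=2$. So one may start from a verified base case.

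For $n\geq 3$ the plan has two stages. Stage one (reduction to weighted hypersurfaces): show that every klt projective $n$-fold $X$ with $K_X$ ample and $\vol(X)\leq 1/2^{2^n}$ is isomorphic to a well-formed hypersurface of degree $d$ in some $\P^{n+1}(a_0,\ldots,a_{n+1})$ with $K_X=O_X(1)$. The underlying idea is that smallness of $\vol(X)$ should force vanishing of $h^0(X,mK_X)$ for a long initial range of $m$, so the canonical ring $R(X,K_X)$ has a strongly constrained Hilbert series; one reads off the weights $a_i$ as the indices at which generators first appear and extracts a hypersurface embedding from the resulting presentation, in the spirit of Koll\'ar's analysis in \cite{Kollarlog} and the constructions in \cite{TW,ETW}. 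Stage two (combinatorial optimization): among such weighted hypersurfaces, minimize $\vol(X)=d/(a_0\cdots a_{n+1})$ subject to $d=1+\sum a_i$ and the klt condition along the coordinate axes. That condition forces $\sum_{i=0}^{n-1}1/a_i$ to lie just below $1$, and Soundararajan's optimality theorem \cite{Soundararajan} then identifies $a_i=d/s_i$ for $0\leq i\leq n-1$ as the unique optimal profile, while $a_n$ and $a_{n+1}$ are pinned down by the ampleness and klt constraints to match the formulas of Theorem~\ref{ample}.

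The main obstacle is stage one. Hacon--McKernan--Xu \cite{HMXacc} give only existential boundedness of volumes, so in dimension $\geq 3$ there is no known mechanism ruling out exotic minimizers with large Cartier index, irregular plurigenus behavior, or no obvious embedding into weighted projective space. The most realistic near-term target is to carry out stage two unconditionally, thereby establishing the conjecture among weighted hypersurfaces, and to verify $n=3$ by supplementing this with a direct enumeration of candidate small-volume examples. A proof in all dimensions would require substantial new input into effective boundedness in the minimal model program well beyond what is currently available.
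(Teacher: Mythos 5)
The statement you were asked to prove is labeled \emph{Conjecture} in the paper. There is no proof of it in the paper; Totaro explicitly poses it as an open problem and supports it only with evidence (the low-dimensional comparisons to quasi-smooth hypersurfaces and to Koll\'ar's log-pair example). So there is no ``paper's own proof'' for your proposal to be compared against, and your text, which is candidly a plan rather than a proof, cannot settle it.

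Beyond that, the one concrete claim you make is not right. You assert that the $n=2$ case ``follows at once from Alexeev--Liu's sharp lower bound $\vol(X)\geq 1/48983$.'' But Totaro includes $n=2$ in the conjecture, which he would not do if that case were a theorem. The paper cites \cite[Theorem 1.4]{AL} only for the \emph{existence} of a klt surface of volume $1/48983$ (``This example was found earlier by Alexeev and Liu, without the description as a hypersurface''), and separately cites \cite[section 10]{AL} for ``a bound in dimension 2,'' not a sharp bound matching $1/48983$. So there is no verified base case.

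The rest of the plan has the difficulties you already acknowledge. Stage one — that every klt $n$-fold of general type with very small volume must be a well-formed hypersurface in weighted projective space with $K_X=O_X(1)$ — has no known proof and is plausibly false as stated; nothing in Hacon--M\textsuperscript{c}Kernan--Xu controls the Cartier index or the shape of the canonical ring effectively. Stage two is also more delicate than invoking Soundararajan: his theorem optimizes a single Egyptian-fraction sum, whereas here one must simultaneously optimize over the extra weights $a_n,a_{n+1}$ and the degree $d$ subject to the (non-quasi-smooth) klt constraint at the last coordinate point, which is exactly the subtle part of Totaro's construction (Theorem \ref{newton} and the Newton-polyhedron argument). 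In short, the proposal correctly diagnoses that the conjecture is out of reach, but it does not establish any case of it, including $n=2$.
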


We know that there is some
positive lower bound for the volume in each dimension,
by Hacon-M\textsuperscript{c}Kernan-Xu \cite[Theorem 1.3]{HMXacc}.

In dimension 2, our example is 
$X_{438}\subset \P^3(219,146,61,11)$, with volume $1/48983
\doteq 2.0\times 10^{-5}$.
This is the smallest known volume for a klt surface with ample
canonical class. This example was found earlier by Alexeev and Liu,
without the description as a hypersurface
\cite[Theorem 1.4]{AL}. It has smaller volume
than all quasi-smooth hypersurfaces
of dimension 2 with $K_X=O_X(1)$, 
by Brown and Kasprzyk's
computer classification \cite{BK,GRD}. Namely, the best
quasi-smooth hypersurface is
$X_{316}\subset \P^3(158,85,61,11)$, with volume
$2/57035\doteq 3.5\times 10^{-5}$.

In dimension 3,
our example is 
$$X_{762090}\subset \P^4(381045,
254030,108870,17713,431),$$
which has volume about $9.5 \times 10^{-18}$.
This beats the best previously known 3-fold,
the quasi-smooth hypersurface
$$X_{340068}\subset \P^4(170034,113356,47269,9185,223),$$
which has volume about $1.8\times 10^{-16}$.
(The latter example is optimal among quasi-smooth
hypersurfaces with $K_X=O_X(1)$, by Brown and Kasprzyk's program.
It is part of the sequence of examples constructed
by Wang and me \cite[section 2]{TW}.)
Finally, in dimension 4, our new example has volume
about $8.0\times 10^{-50}$. Again, this beats the optimal
quasi-smooth hypersurface with $K_X=O_X(1)$ in dimension 4,
which has volume about $1.4\times 10^{-47}$
\cite[ID 538926]{GRD}.

Finally, our klt variety with ample canonical class
has volume quite close to that of Koll\'ar's conjecturally
optimal example in the broader setting of klt pairs
with ample canonical class and standard coefficients (meaning
coefficients of the form $(m-1)/m$ for positive integers $m$).
That example is
$$(Y,\Delta)=\bigg( \P^n,\frac{1}{2}H_0+\frac{2}{3}H_1
+\frac{6}{7}H_2+\cdots+\frac{s_{n+1}-1}{s_{n+1}}H_{n+1}\bigg),$$
where $H_0,H_1,\ldots,H_{n+1}$ are $n+2$ general hyperplanes.
The volume of $K_Y+\Delta$ is $1/(s_{n+2}-1)^n$, which is (crudely)
about $1/2^{2^n}$. Our example in Theorem \ref{ample} has
$\vol(X)/\vol(K_Y+\Delta)$ about $2^{2n+2}$. (Precisely,
$\log(\vol(X)/\vol(K_Y+\Delta))$ is asymptotic
to $(2n+2)\log 2$ as $n$ goes to infinity.)
So $\vol(X)$ is bigger than $\vol(K_Y+\Delta)$,
but not by much, since $2^{2n+2}$ is far smaller than $2^{2^n}$.
That is some further evidence for the optimality of Theorem \ref{ample}.

\begin{proof}
(Theorem \ref{ample})
To explain the choice of weights $a_i$, we first prove some properties
of these numbers. First, we have $d-\sum a_i=1$ (which will imply
that $K_X=O_X(1)$), because $d-\sum_{i=0}^{n-1} a_i
=d(1-1/s_0-\cdots -1/s_{n-1})=d/(s_n-1)=x=1+a_n+a_{n+1}$.
Next, let us check that the monomials listed in the equation
of $X$ (above) have degree $d$. For $n$ even,
let $b=(s_n^2-2s_n+7)/2$; then we have to show that
$d=2a_0=3a_1=\cdots=s_{n-1}a_{n-1}=s_na_n+a_{n+1}
=a_1+\cdots+a_n+ba_{n+1}$. All these equations except the last one
are easy by our choice of weights. For the last one,
note that 
\begin{align*}
d-a_1-\cdots-a_n&=d\bigg(1-\frac{1}{s_1}-\cdots-\frac{1}{s_{n-1}}\bigg)-a_n\\
&=d\bigg(\frac{1}{2}+\frac{1}{s_n-1}\bigg)-a_n\\
&=\frac{s_n+1}{2}x-a_n\\
&=\frac{s_n+1}{2}+\frac{s_n-1}{2}a_n+\frac{s_n+1}{2}a_{n+1}\\
&=\frac{1}{2}(s_n^2-s_n+2)+\frac{1}{2}(s_n^2-2s_n+3)a_{n+1}\\
&=\frac{1}{2}(s_n^2-2s_n+7)a_{n+1},
\end{align*}
as we want. For $n$ odd, a similar calculation shows
that the monomials in the equation of $X$ have degree $d$.

We first show that the weighted projective
space $Y=\P^{n+1}(a_0,\ldots,a_{n+1})$ is well-formed.
That is, we have to show that $\gcd(a_0\ldots,\widehat{a_j},
\ldots,a_{n+1})=1$ for each $j$. It suffices to show
that $\gcd(a_{n+1},x)=1$, $\gcd(a_n,x)=1$,
and $\gcd(a_{n+1},a_n,s_n-1)=1$. (This uses that $s_n-1=
s_0\cdots s_{n-1}$, where $s_0,\ldots,s_{n-1}$ are pairwise
coprime.)

First, note
that $s_n$ is $7\pmod{8}$ if $n\geq 2$ is even and $3\pmod{8}$
if $n\geq 3$ is odd. This is immediate by induction from
the recurrence $s_{n+1}=s_n(s_n-1)+1$. It follows that
$s_n^2-s_n+2$ is $4\pmod{8}$ if $n\geq 2$ is even,
and that $s_n^2-3s_n+4$ is $4\pmod{8}$ if $n\geq 3$ is odd.
So $a_{n+1}$ is odd in both cases.

Next, let us show that $\gcd(a_{n+1},x)=1$.
Suppose that a prime number $p$ divides both $a_{n+1}$ and $x$.
Since $a_{n+1}$ is odd, $p$ is not 2.
Since $x=1+a_n+a_{n+1}$, we have $a_n\equiv -1\pmod{p}$.
Since $a_n=(s_n-2)a_{n+1}+(s_n-1)$, we have $-1=s_n-1\pmod{p}$,
so $p$ divides $s_n$. If $n\geq 2$ is even, 
$s_n^2-s_n+2\equiv 2\pmod{p}$, and this is not zero
mod $p$. So $a_{n+1}$ is not zero mod $p$,
a contradiction. Likewise, for $n$ odd,
$s_n^2-3s_n+4\equiv 4\pmod {p}$, and this is not zero
mod $p$. So $a_{n+1}$ is not zero mod $p$,
a contradiction. Thus $a_{n+1}$ is prime to $x$. 

To show that $\gcd(a_n,x)=1$,
suppose that a prime number $p$ divides both $a_n$ and $x$.
Since $x=1+a_n+a_{n+1}$, we have $a_{n+1}\equiv -1\pmod{p}$.
Since $a_n=(s_n-2)a_{n+1}+(s_n-1)$, we have $0\equiv -(s_n-2)+(s_n-1)
\equiv -1\pmod{p}$, a contradiction. So $a_n$ is prime to $x$.

It remains to show that $\gcd(a_n,a_{n+1},s_n-1)=1$; in fact,
we show that $\gcd(a_{n+1},s_n-1)=1$.
Let $p$ be a prime number that divides $a_{n+1}$
and $s_n-1$. We have $s_n\equiv 1\pmod{p}$,
so $s_n^2-s_n+2\equiv 2\pmod{p}$
and $s_n^2-3s_n+4\equiv 2\pmod{p}$. Since $p$ is not 2,
these two expressions are not zero
mod $p$. It follows that $a_{n+1}$ is not zero mod $p$,
a contradiction. This completes the proof that $Y$
is well-formed.

To show that $X$ is well-formed, it remains to show
that $X$ does not contain any $(n-1)$-dimensional
coordinate linear subspace of $Y$
along which $Y$ is singular.
Since the equation of $X$ includes the monomials
$x_0^2,x_1^3,\ldots,x_{n-1}^{s_{n-1}}$, and also $x_n^{s_n}x_{n+1}$,
$X$ does not contain
any positive-dimensional coordinate linear subspace of $Y$. Since $n\geq 2$,
$X$ is well-formed.

Next, we show that $X$ is klt. First suppose that $n$ is even.
In this case, the equation defining $X$ is
$0=x_0^2+x_1^3+\cdots
+x_{n-1}^{s_{n-1}}+x_n^{s_n}x_{n+1}+x_1\cdots x_nx_{n+1}^b$,
where $b=(s_n^2-2s_n+7)/2$. In particular,
the base locus of the linear system $|O(d)|$ on $Y$
is contained in the two last coordinate points,
$[0,\ldots,0,1,0]$ and $[0,\ldots,0,1]$. 
Since we are in characteristic zero,
Bertini's theorem on $A^{n+2}-0$ gives that
$X$ is quasi-smooth outside those two points. In view
of the monomial $x_n^{s_n}x_{n+1}$, $X$ is also quasi-smooth
at the point $[0,\ldots,0,1,0]$. So $X$ is klt outside
the point $[0,\ldots,0,1]$.

At the point $[0,\ldots,0,1]$, $X$ is not quasi-smooth,
but we will show that it is still klt. Because the number
of monomials in the equation of $X$ is equal to the number
of variables, a general linear combination of those monomials
is isomorphic to $X$, by scaling the variables. So it suffices
to show that the hypersurface $X'$ defined by such a general
linear combination is klt at $[0,\ldots,0,1]$.
In coordinates $x_{n+1}=1$, the equation of $X'$
is $0=c_0x_0^2+c_1x_1^3+\cdots
+c_{n-1}x_{n-1}^{s_{n-1}}+c_nx_n^{s_n}+c_{n+1}x_1\cdots x_n$
for general complex numbers $c_i$.
The open subset $x_{n+1}\neq 0$ of $X'$
is the quotient by the finite cyclic group
$\mu_{a_{n+1}}$ of the hypersurface
with the same equation
in $A^{n+1}$. Because the klt property is preserved
by finite quotients, it suffices to show that such a general
hypersurface $S$ in $A^{n+1}$ has canonical singularities
(or equivalently, rational singularities).

Ishii and Prokhorov (following earlier work)
described when the general hypersurface $S\subset A^{n+1}$ with equation
spanned by a given set $I$ of monomials has canonical singularities,
as follows \cite[Proposition 2.9]{IP}.
By definition, the {\it Newton
polyhedron }of a finite subset $I\subset \R^{n+1}$
is the convex hull of $I$ plus the positive
orthant, $(\R^{\geq 0})^{n+1}$.

\begin{theorem}
\label{newton}
Let $I$ be a finite subset of $\N^{n+1}$, viewed as monomials
in $\C[x_0,\ldots,x_n]$. Let $S\subset A^{n+1}_{\C}$
be the zero set of a general linear combination
of these monomials. Assume that for every $0\leq i<j\leq n$,
there is an monomial in $I$ that contains neither $x_i$ nor $x_j$;
then the hypersurface $S$ is normal.
If the Newton polyhedron of $I$ in $\R^{n+1}$ contains
$(1,\ldots,1)$ in its interior, then the hypersurface
$S$ has canonical singularities.
The converse holds if $I$ contains no monomial of degree 1.
\end{theorem}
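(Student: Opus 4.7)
The plan is to reduce Theorem \ref{newton} to a discrepancy computation on a toric log resolution of $A^{n+1}$, in the style of Reid, Varchenko, and Khovanskii. For the normality statement I would apply Serre's criterion $R_1+S_2$: a hypersurface is automatically Cohen--Macaulay, so only regularity in codimension one is at stake. A general member of the linear system spanned by $I$ is smooth off the base locus by Bertini, and the base locus is contained in the union of the coordinate subspaces of $A^{n+1}$. The pairwise monomial hypothesis guarantees that for each codimension-two coordinate flat $\{x_i=x_j=0\}$ some $m\in I$ is nonvanishing there, so a generic linear combination does not vanish identically on any such flat; this forces $\dim\sing(S)\leq n-2$.

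For canonical singularities, I would take a toric refinement $\pi\colon \widetilde Y\to A^{n+1}$ of the positive-orthant fan, fine enough that the proper transform $\widetilde S$ of the generic $S$ is smooth and meets every exceptional divisor transversely; Varchenko's nondegeneracy theorem for hypersurfaces with given Newton polyhedron guarantees that this succeeds for generic coefficients of the monomials in $I$. For a primitive vector $v=(v_0,\ldots,v_n)\in\N^{n+1}\setminus 0$, the toric divisor $E_v$ has log discrepancy $\sum_i v_i$ on $A^{n+1}$, and $\pi^*S$ vanishes to order $\min_{m\in I}\langle v,m\rangle$ along $E_v$ for generic coefficients. Adjunction on $\widetilde Y$ then gives the discrepancy on $\widetilde S$ as
$$a(E_v;S)=\sum_i v_i-1-\min_{m\in I}\langle v,m\rangle.$$
Since on a Newton-nondegenerate $S$ every divisorial valuation centered at the origin is dominated by such a toric one, $S$ is canonical iff $\min_{m\in I}\langle v,m\rangle\leq \sum_i v_i-1$ for all primitive $v$ in the interior of the orthant. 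Integrality makes this equivalent to the existence, for every such $v$, of $m\in I$ with $\langle v,m\rangle<\langle v,(1,\ldots,1)\rangle$, which is exactly the condition that $(1,\ldots,1)$ lies in the interior of the Newton polyhedron.

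For the converse under the no-degree-one hypothesis, I would note that if $(1,\ldots,1)$ sits on a face $F$ of the Newton polyhedron with inner normal $v\in\N^{n+1}\setminus 0$, then $\langle v,m\rangle\geq \sum_i v_i$ for all $m\in I$, so $a(E_v;S)\leq -1$ and $S$ is not canonical; ruling out degree-one monomials in $I$ excludes the case where $F$ records only a smooth coordinate hyperplane factor of $S$ at the origin rather than a genuine singularity.

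The main obstacle is the passage from toric valuations to all divisorial valuations: one has to argue that for generic coefficients, the toric log resolution is a log resolution of the pair $(A^{n+1},S)$, and that the infimum of discrepancies of exceptional divisors over $S$ is already attained on toric $E_v$. This is the technical content of Newton nondegeneracy after Varchenko, and is the heart of \cite[Proposition 2.9]{IP}.
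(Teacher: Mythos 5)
The paper gives no proof of Theorem \ref{newton}; it quotes the result directly from Ishii--Prokhorov \cite[Proposition 2.9]{IP} (who build on Reid, Watanabe, and Varchenko). Your sketch is a correct reconstruction of that proof: Serre's criterion plus Bertini off the base locus for normality, and the toric log resolution with the discrepancy formula $a(E_v;S)=\sum_i v_i - 1 - \min_{m\in I}\langle v,m\rangle$ for the canonical criterion, with integrality converting the inequality $\min_m\langle v,m\rangle\leq\sum_i v_i-1$ into the statement that $(1,\ldots,1)$ lies strictly below every supporting hyperplane of the Newton polyhedron.

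One small refinement to your normality step: the pairwise hypothesis in fact excludes every coordinate flat of codimension $\leq 2$ from the base locus, not just codimension $1$ (a flat $\{x_i=x_j=0\}$ lies in the base locus exactly when every monomial involves $x_i$ or $x_j$, which the hypothesis forbids). So $\mathrm{Bs}(|I|)$ already has dimension $\leq n-2$, and $\sing(S)\subset\mathrm{Bs}(|I|)$ has codimension $\geq 2$ in $S$ without any need to argue that $S$ meets each flat properly. You are also right to flag the passage from toric to arbitrary divisorial valuations as the technical heart: once Newton nondegeneracy guarantees (for generic coefficients) that a sufficiently fine toric refinement is a log resolution of $(A^{n+1},S)$, every exceptional divisor of $\widetilde S\to S$ is a component of some $E_v\cap\widetilde S$ and inherits the discrepancy you wrote, so the infimum is attained on the toric ones.
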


As above, for $n\geq 2$ even, let $S$ be the zero set in $A^{n+1}$
of a general linear combination of $x_0^2,x_1^3,\ldots,
x_{n-1}^{s_{n-1}},x_n^{s_n}$, and $x_1\cdots x_n$.
The first condition in Theorem \ref{newton} (ensuring normality of $S$)
is clear. Therefore, to show that $S$ is canonical and hence $X$ is klt,
it suffices to show that the convex hull of the points
$(2,0,\ldots,0)$, $(0,3,0,\ldots,0)$, \ldots,
$(0,\ldots,0,s_n)$, $(0,1,\ldots,1)$ in $\R^{n+1}$ contains
a point with all coordinates less than 1. In fact, we only need
three of these points: namely, $(5/12)(2,0,\ldots,0)+
(1/6)(0,3,0\ldots,0)+(5/12)(0,1,\ldots,1)$ has all coordinates
less than 1. Thus $X$ is klt when its dimension $n$ is even.

For $n\geq 3$ odd, the equation defining $X$ is
$0=x_0^2+x_1^3+\cdots
+x_{n-1}^{s_{n-1}}+x_n^{s_n}x_{n+1}+x_1\cdots x_{n-1}x_n^2x_{n+1}^b$,
where now $b=(s_n^2-4s_n+11)/2$. As above, it is equivalent
to consider a general linear combination of these monomials.
As in the case of $n$
even, $X$ is quasi-smooth outside the point
$[0,\ldots,0,1]$. To show that $X$ is klt at that point,
it suffices to show that the convex hull of the points
$(2,0,\ldots,0)$, $(0,3,0,\ldots,0)$, \ldots,
$(0,\ldots,0,s_n)$, $(0,1,\ldots,1,2)$ in $\R^{n+1}$ contains
a point with all coordinates less than 1. Again, we only need
three of these points: namely, $(5/12)(2,0,\ldots,0)+
(1/6)(0,3,0\ldots,0)+(5/12)(0,1,\ldots,1,2)$ has all coordinates
less than 1. Thus $X$ is klt whether $n$ is even or odd.

Since $X$ is well-formed, the adjunction formula holds,
meaning that $K_X=O_X(d-\sum a_j)=O_X(1)$.
Therefore, 
\begin{align*}
\vol(K_X)&=\frac{d}{a_0\cdots a_{n+1}}\\
&= \frac{1}{(s_n-1)^{n-2}x^{n-1}a_na_{n+1}}.
\end{align*}
Here $a_{n+1}\sim s_n^2/4$
and $a_n\sim s_n^3/4$ (much bigger than $a_{n+1}$),
so $x\sim s_n^3/4$.
It follows that $\vol(K_X)\sim 2^{2n+2}/s_n^{4n}$.
\end{proof}

\section{Klt Fano varieties}

\begin{theorem}
\label{fano}
For each integer $n$ at least 2, let 
$$a_{n+1}=\begin{cases} \frac{1}{4}(s_n^2-s_n+2)&\text{if $n$ is even}\\
\frac{1}{4}(s_n^2-3s_n+4)&\text{if $n$ is odd.}
\end{cases}$$
Let $a_n=(s_n-2)a_{n+1}-(s_n-1)$, $x=-1+a_n+a_{n+1}$,
$d=(s_n-1)x$, and $a_i=d/s_i$ for $0\leq i\leq n-1$.
Then there is a hypersurface $X$ of degree $d$ in
$\P^{n+1}(a_0,\ldots,a_{n+1})$ that is a well-formed klt Fano variety,
with $-K_X=O_X(1)$. The volume of $-K_X$ is
$$\frac{1}{(s_n-1)^{n-2}x^{n-1}a_na_{n+1}},$$
which is asymptotic to $2^{2n+2}/s_n^{4n}$.
In particular, this is less
than $1/2^{2^n}$.
\end{theorem}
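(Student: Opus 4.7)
The plan is to mirror the proof of Theorem \ref{ample} step by step, exploiting the fact that the only substantive changes are two sign flips in the definitions: $a_n = (s_n-2)a_{n+1} - (s_n-1)$ and $x = -1 + a_n + a_{n+1}$. These flips are precisely what is needed to force $\sum_j a_j - d = 1$ (so that $K_X = O_X(-1)$ and hence $-K_X = O_X(1)$) in place of $d - \sum_j a_j = 1$. I would take as defining equation for $X$ the same shape as in Theorem \ref{ample}, namely
\[
0 = x_0^2 + x_1^3 + \cdots + x_{n-1}^{s_{n-1}} + x_n^{s_n} x_{n+1} + x_1 \cdots x_n x_{n+1}^b
\]
for $n$ even (and the analogous equation with $x_1 \cdots x_{n-1} x_n^2 x_{n+1}^b$ for $n$ odd), but with $b$ recomputed to absorb the sign change. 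A direct calculation parallel to the one in Theorem \ref{ample} gives $b = (s_n^2 - 2s_n - 1)/2$ for $n$ even and $b = (s_n-1)(s_n-3)/2$ for $n$ odd, both positive integers since $s_n$ is odd.

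First I would check the arithmetic identities: $d = s_i a_i$ for $i < n$ by definition, $s_n a_n + a_{n+1} = d$ is forced by the formula for $a_n$, and the final ``cross'' monomial has degree $d$ for the value of $b$ above. Next I would verify well-formedness of $Y = \P^{n+1}(a_0,\ldots,a_{n+1})$ by establishing the three coprimalities $\gcd(a_{n+1},x)=1$, $\gcd(a_n,x)=1$, and $\gcd(a_{n+1},s_n-1)=1$, which together with $s_n-1 = s_0\cdots s_{n-1}$ and $a_i = d/s_i$ for $i<n$ imply all the needed pairwise gcd conditions. Each coprimality is checked modulo a putative common prime $p$: the parity argument showing $a_{n+1}$ is odd is unchanged from Theorem \ref{ample}, and after the sign flips one still derives either $s_n \equiv 0$ (which reduces $4a_{n+1}$ to $2$ or $4$ mod $p$, forcing $p=2$, a contradiction) or a direct contradiction like $0 \equiv -1 \pmod p$. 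Well-formedness of $X$ then follows from the presence of the pure-power monomials $x_0^2,\ldots,x_{n-1}^{s_{n-1}}$ together with $x_n^{s_n} x_{n+1}$, which prevent $X$ from containing any positive-dimensional coordinate linear subspace of $Y$.

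For the klt property, Bertini on $A^{n+2}\setminus 0$ gives quasi-smoothness away from the two coordinate points $[0,\ldots,0,1,0]$ and $[0,\ldots,0,1]$, and the monomial $x_n^{s_n}x_{n+1}$ handles the first. At the remaining point, passing to the affine chart $x_{n+1}=1$ reduces the problem to showing that a general hypersurface in $A^{n+1}$ spanned by $x_0^2, x_1^3, \ldots, x_{n-1}^{s_{n-1}}, x_n^{s_n}$ and $x_1\cdots x_n$ (or $x_1\cdots x_{n-1}x_n^2$ for $n$ odd) has canonical singularities. This is the \emph{same} affine model as in Theorem \ref{ample}, because both $x_{n+1}$ and $x_{n+1}^b$ trivialize in the chart, so the Newton polyhedron argument based on Theorem \ref{newton} goes through verbatim. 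Finally, the volume identity $\vol(-K_X) = d/(a_0\cdots a_{n+1}) = 1/((s_n-1)^{n-2}x^{n-1}a_n a_{n+1})$ and the asymptotic $2^{2n+2}/s_n^{4n}$ follow exactly as in the ample case. The main (modest) obstacle is the mod-$p$ well-formedness check, where the sign reversals propagate through several congruences and must be redone carefully; everything else is a mechanical transcription of the proof of Theorem \ref{ample}.
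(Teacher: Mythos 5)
Your proposal is correct and follows essentially the same route as the paper: identify the two sign flips in $a_n$ and $x$, recompute $b$ (your values $(s_n^2-2s_n-1)/2$ for $n$ even and $(s_n-1)(s_n-3)/2 = (s_n^2-4s_n+3)/2$ for $n$ odd match the paper), observe that the well-formedness congruences go through with the obvious sign changes and that the affine chart $x_{n+1}=1$ produces the identical hypersurface as in Theorem \ref{ample}, so the Newton polyhedron and adjunction computations transfer verbatim. This matches the paper's proof, which likewise reduces to the arguments of Theorem \ref{ample} after noting the sign changes.
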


Explicitly, define $X$ by the equation, for $n\geq 2$ even:
$$0=x_0^2+x_1^3+\cdots
+x_{n-1}^{s_{n-1}}+x_n^{s_n}x_{n+1}+x_1\cdots x_nx_{n+1}^b,$$
where $b=(s_n^2-2s_n-1)/2$. For $n\geq 3$ odd, define $X$ by
$$0=x_0^2+x_1^3+\cdots
+x_{n-1}^{s_{n-1}}+x_n^{s_n}x_{n+1}+x_1\cdots x_{n-1}x_n^2x_{n+1}^b,$$
where now $b=(s_n^2-4s_n+3)/2$. Since the number of monomials is equal
to the number of variables, any linear combination
of these monomials with all coefficients nonzero
defines an isomorphic variety, by scaling
the variables. One can check
that the monomials shown are all the monomials of degree $d$,
and hence that an open subset
of all hypersurfaces of degree $d$ are isomorphic to the one indicated;
but we will not need those facts.

Note that $X$ is not quasi-smooth. We show in Theorems
\ref{evenex} and \ref{oddex} that this Fano variety is exceptional.

\begin{conjecture}
For each integer $n$ at least 2,
the variety in Theorem \ref{fano}
has the minimum anticanonical volume among all exceptional
klt Fano $n$-folds.
\end{conjecture}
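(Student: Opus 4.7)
The plan is to establish a universal lower bound $\vol(-K_X)\geq 1/((s_n-1)^{n-2}x^{n-1}a_na_{n+1})$ on every exceptional klt Fano $n$-fold $X$, matching the value produced by Theorem \ref{fano}. Because the asymptotic formula $\vol(-K_X)\sim 2^{2n+2}/s_n^{4n}$ explicitly involves Sylvester's sequence, any proof has to reduce the problem to an Egyptian-fraction extremality statement of the type governed by Soundararajan's theorem; the challenge is that the known lower-bound technology for exceptional Fanos (Birkar's boundedness) is far too qualitative.

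The central step is to extract, from an arbitrary exceptional klt Fano $n$-fold $X$, a combinatorial datum of $n+2$ positive integers $b_0,\ldots,b_{n+1}$ with $\sum 1/b_i<1$, such that $\vol(-K_X)$ is bounded below by an explicit decreasing function of $1-\sum 1/b_i$. Two plausible sources suggest themselves. First, one may try to produce a $\Q$-Gorenstein degeneration of $X$ to a weighted hypersurface, using the existence of a K\"ahler--Einstein metric from \cite{OS,LXZ} and the K-moduli framework, so that the weights of the central fibre serve as the $b_i$. Second, one could study the minimizer of the normalized volume functional on the space of valuations over a well-chosen point of $X$, in the framework of Li--Xu and Blum-Liu-Xu, and read off the $b_i$ from the resulting weighted tangent cone. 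Once such a reduction is in place, Soundararajan's theorem forces the minimizing $b_i$ to be the Sylvester numbers $s_0,\ldots,s_{n-1}$, with the remaining two weights recovered by a secondary extremality problem analogous to the one solved inside the proof of Theorem \ref{fano}; the constraint $\glct(X)>1$, together with the explicit $\glct$ computations promised in Theorems \ref{evenex} and \ref{oddex}, then pins down the remaining freedom.

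For low-dimensional evidence I would, in dimension $2$, combine Lacini's classification \cite{Lacini,KM} of exceptional klt del Pezzo surfaces of Picard number $1$ with a direct analysis of the Picard-number-$\geq 2$ case (where the bounded-family statement reduces the problem to a finite check amenable to Brown--Kasprzyk's graded ring database \cite{BK,GRD}). In dimensions $3$ and $4$ one verifies the conjectured bound on all quasi-smooth weighted hypersurface Fanos via \cite{GRD} and then hunts for non-quasi-smooth competitors among hypersurfaces with small numbers of monomials, exactly as in the ample-canonical case treated around Theorem \ref{ample}.

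The hard part, by a wide margin, is the reduction from the abstract exceptionality hypothesis to the Egyptian-fraction extremality problem. Birkar's qualitative boundedness \cite{Birkarcomp} gives no quantitative handle on the numerics, and there is no known general procedure that turns $\glct(X)>1$ into a sum of $n+2$ unit fractions close to $1$. A genuinely new input, most likely a sharp quantitative form of the K-stable minimizer analysis at an extremal valuation over a deepest point of $X$, appears to be needed to realize the reduction in arbitrary dimension; this is why the statement above is phrased only as a conjecture.
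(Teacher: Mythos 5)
The statement you are addressing is stated in the paper as a \emph{conjecture}, and the paper offers no proof of it --- only low-dimensional evidence (comparisons with Lacini's classification in dimension 2 and with the Brown--Kasprzyk database in dimensions 3 and 4). Your proposal likewise contains no proof: its ``central step,'' namely extracting from an arbitrary exceptional klt Fano $n$-fold a tuple of $n+2$ positive integers with reciprocal sum less than $1$ that controls $\vol(-K_X)$ from below, is exactly the missing reduction, and you acknowledge at the end that no known technique (neither Birkar's boundedness, nor K-moduli degenerations, nor normalized-volume minimizers) realizes it. Soundararajan's extremality theorem for Egyptian fractions only becomes relevant \emph{after} such a reduction, so invoking it does not close the gap. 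This is consistent with the paper, which is why the author states the claim only as a conjecture.

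One concrete inaccuracy in your evidence plan for dimension 2: Lacini's classification covers exceptional klt del Pezzo surfaces of Picard number 1, whereas the paper's candidate $X_{354}\subset\P^3(177,118,49,11)$ has Picard number 2; the paper only asserts that its example beats all Picard-number-1 competitors, not that the Picard-number-$\geq 2$ case has been reduced to a finite check. So even the two-dimensional case of the conjecture remains open, and your proposed ``direct analysis'' of the higher Picard number case is not something the paper (or any cited reference) supplies. In short, the proposal is a reasonable research program, but there is a genuine and admitted gap at its core, and the statement should be treated as unproven.
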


Birkar showed that exceptional klt Fano varieties form
a bounded family in each dimension, and so there is some positive
lower bound for their volumes
\cite[Theorem 1.3]{Birkarcomp}.

In dimension 2, our example
is the klt del Pezzo surface $X_{354}\subset \P^3(177,
118, 49, 11)$, for which $\vol(-K_X)=1/31801\doteq 3.1\times 10^{-5}$.
The lowest volume previously known for an exceptional
klt del Pezzo surface occurs for Johnson-Koll\'ar's quasi-smooth surface
$X_{256}\subset \P^3(128,69,49,11)$,
with volume $2/37191\doteq 5.4\times 10^{-5}$
\cite[Theorem 8]{JKEinstein}. Exceptionality of the latter surface
follows from Johnson-Koll\'ar's theorem that a well-formed quasi-smooth
hypersurface $X_d\subset \P^{n+1}(a_0,\ldots,a_{n+1})$
with $d=-1+\sum a_j$ (so $K_X=O_X(-1)$) 
and $a_0\geq\cdots\geq a_{n+1}$ is exceptional
if $d\leq a_na_{n+1}$ \cite[Proposition 3.3]{JK4}.

In dimension 3, our example is the klt Fano 3-fold
$$X_{758478}\subset\P^4(379239,252826,108354,17629,431),$$
with anticanonical volume about $9.6\times 10^{-18}$.
The lowest previously known volume
of an exceptional klt Fano 3-fold
occurs for Johnson-Koll\'ar's
quasi-smooth hypersurface
$$X_{336960}\subset
\P^4(168480,112320,46837,9101,223),$$
which has volume
about $1.9\times 10^{-16}$ \cite[introduction]{JK4}.
Finally, our klt Fano 4-fold has volume about
$8.0\times 10^{-50}$. The smallest previously known volume
of an exceptional klt Fano 4-fold is
$1.4\times 10^{-47}$, again for a certain quasi-smooth
hypersurface \cite[ID 1233322]{GRD}.

\begin{proof}
(Theorem \ref{fano}) The proof is similar to that of
Theorem \ref{ample}, where the canonical class is ample.
In particular, the weight $a_{n+1}$ is the same
in the two theorems,
and the formulas for $a_n$ and $x$ differ only by sign changes.
Modifying the calculation at the start of the proof
of Theorem \ref{ample} by these sign changes shows
that $d-\sum a_i=-1$ (rather than 1).
Also, whether $n$ is even or odd,
we compute that the monomials in the equation
for $X$ have degree $d$.

As shown in the proof of Theorem \ref{ample},
$a_{n+1}$ is odd.
The sign changes make no difference to the proof
that $\gcd(a_{n+1},x)=1$, $\gcd(a_n,x)=1$,
and $\gcd(a_{n+1},a_n,s_n-1)=1$.
Therefore,
$Y=\P^{n+1}(a_0,\ldots,a_{n+1})$ is well-formed.

To show that $X$ is well-formed, it remains to show
that $X$ does not contain any $(n-1)$-dimensional
coordinate linear subspace of $Y$
along which $Y$ is singular.
Since the equation of $X$ includes the monomials
$x_0^2,x_1^3,\ldots,x_{n-1}^{s_{n-1}}$, and $x_n^{s_n}x_{n+1}$,
$X$ does not contain
any positive-dimensional coordinate linear subspace of $Y$. Since $n\geq 2$,
$X$ is well-formed.

Next, we show that $X$ is klt. First suppose that $n$ is even.
In this case, the equation defining $X$ is
$0=x_0^2+x_1^3+\cdots
+x_{n-1}^{s_{n-1}}+x_n^{s_n}x_{n+1}+x_1\cdots x_nx_{n+1}^b$,
where $b=(s_n^2-2s_n-1)/2$. It follows that $X$ is
quasi-smooth and hence klt outside
the point $[0,\ldots,0,1]$.
At the point $[0,\ldots,0,1]$, $X$ is not quasi-smooth,
but we will show that it is still klt.
In coordinates $x_{n+1}=1$, the equation of $X$
is $0=x_0^2+x_1^3+\cdots
+x_{n-1}^{s_{n-1}}+x_n^{s_n}+x_1\cdots x_n$.
We showed in the proof of Theorem \ref{ample} that this
hypersurface in $A^{n+1}$
has canonical singularities near the origin.
Therefore, $X$ (the quotient by $\mu_{a_{n+1}}$) is klt
at $[0,\ldots,0,1]$, as we want.

For $n\geq 3$ odd, the equation defining $X$ is
$0=x_0^2+x_1^3+\cdots
+x_{n-1}^{s_{n-1}}+x_n^{s_n}x_{n+1}+x_1\cdots x_{n-1}x_n^2x_{n+1}^b$,
where now $b=(s_n^2-4s_n+3)/2$. Again, $X$ is quasi-smooth
and hence klt outside the last coordinate point
$[0,\ldots,0,1]$. To show that $X$ is klt at that point,
use coordinates $x_{n+1}=1$ to write the equation of $X$
as $0=x_0^2+x_1^3+\cdots+x_{n-1}^{s_{n-1}}+x_n^{s_n}+
x_1\cdots x_{n-1}x_n^2$. We showed 
in the proof of Theorem \ref{ample} that this
hypersurface in $A^{n+1}$
has canonical singularities near the origin.
Therefore, $X$ (the quotient by $\mu_{a_{n+1}}$) is klt
at $[0,\ldots,0,1]$, as we want, whether $n$ is even or odd.

Since $X$ is well-formed, the adjunction formula holds,
meaning that $K_X=O_X(d-\sum a_j)=O_X(-1)$.
Therefore, 
\begin{align*}
\vol(-K_X)&=\frac{d}{a_0\cdots a_{n+1}}\\
&= \frac{1}{(s_n-1)^{n-2}x^{n-1}a_na_{n+1}}.
\end{align*}
Here $a_{n+1}\sim s_n^2/4$
and $a_n\sim s_n^3/4$ (much bigger than $a_{n+1}$),
so $x\sim s_n^3/4$.
It follows that $\vol(-K_X)\sim 2^{2n+2}/s_n^{4n}$.
\end{proof}

\section{Estimating the log canonical threshold in terms of the weighted
tangent cone}
\label{estimating}

Sections \ref{estimating} to \ref{oddexsect}
will show that the klt Fano varieties
in Theorem \ref{fano} are exceptional (Theorems \ref{evenex}
and \ref{oddex}). This follows from
their global log canonical threshold (or $\alpha$-invariant)
being greater than 1. In fact, we compute the global log canonical
threshold exactly. The method should be useful for many other examples.

\begin{lemma}
\label{lc}
Let $a_0,\ldots,a_n$ be positive integers. Let $X$ be a hypersurface
in $A^{n+1}$ over $\C$ that contains the origin.
Let $\Delta$ be an effective $\Q$-Cartier $\Q$-divisor in $X$.
Let $X_1$ be the $a$-weighted tangent cone of $X$ at the origin; thus $X_1$
is a hypersurface of some degree $d$ in the weighted projective space
$Y=\P(a_0,\ldots,a_n)$, viewed as a smooth Deligne-Mumford stack over $\C$.
Likewise, let $\Delta_1\subset X_1$
be the weighted tangent cone of $\Delta$. Then $K_{X_1}+\Delta_1
\sim_{\Q} O_{X_1}(r)$ for some $r\in\Q$. If $r\leq 0$
and $(X_1,\Delta_1)$
is log canonical, then $(X,\Delta)$ is log canonical near 0.
\end{lemma}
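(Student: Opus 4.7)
My plan is to perform the $a$-weighted blow-up of $A^{n+1}$ at the origin, compute that the log discrepancy of the exceptional divisor $X_1$ over $(X,\Delta)$ equals exactly $-r$, and then reduce the log canonicity of $(X,\Delta)$ near $0$ to the log canonicity of $(X_1,\Delta_1)$ via inversion of adjunction on $X_1$. Let $\pi\colon W\to A^{n+1}$ be the $a$-weighted blow-up of the origin, a smooth Deligne-Mumford stack with exceptional divisor $E\cong\mathcal Y$. Let $\tilde X\subset W$ be the strict transform of $X$ and $\tilde\Delta$ the strict transform of $\Delta$. Writing the defining equation of $X$ as $f=f_d+f_{d+1}+\cdots$ in its $a$-weighted-homogeneous decomposition, in a blow-up chart with $u$ a coordinate cutting out $E$ the variety $\tilde X$ is cut out by $f_d(\cdot)+uf_{d+1}(\cdot)+u^2 f_{d+2}(\cdot)+\cdots=0$, so $\tilde X\cap E=X_1$ is the locus $\{u=0\}\subset\tilde X$, which is therefore Cartier in $\tilde X$. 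Since the hypothesis that $(X_1,\Delta_1)$ is lc forces $X_1$ to be normal, this chart description also shows $\tilde X$ is normal along $X_1$.

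Next I carry out the discrepancy calculation on $\tilde X$. On the smooth stack $W$ one has $K_W=(\sum a_i-1)E$ and $\pi^*X=\tilde X+dE$, so adjunction to $\tilde X$ yields $K_{\tilde X}=(\pi|_{\tilde X})^*K_X+(\sum a_i-1-d)X_1$. Letting $c$ denote the $a$-weighted multiplicity of $\Delta$ at $0$, the pullback reads $(\pi|_{\tilde X})^*\Delta=\tilde\Delta+cX_1$. Adjunction on the stack gives $K_{X_1}\sim O_{X_1}(d-\sum a_i)$, and combining this with the weighted-homogeneity identity $\Delta_1\sim_{\Q}O_{X_1}(c)$ and the hypothesis $K_{X_1}+\Delta_1\sim_{\Q}O_{X_1}(r)$ forces $c=r-d+\sum a_i$. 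Adding the two formulas produces the key identity
\[
K_{\tilde X}+\tilde\Delta+(1+r)X_1=(\pi|_{\tilde X})^*(K_X+\Delta),
\]
so the log discrepancy of $X_1$ over $(X,\Delta)$ is exactly $-r$.

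Finally, inversion of adjunction closes the argument. Because $\pi|_{\tilde X}$ is birational with unique exceptional divisor $X_1$, the pair $(X,\Delta)$ is lc near $0$ if and only if $(\tilde X,\tilde\Delta+(1+r)X_1)$ is lc near $X_1$. The assumption $r\leq 0$ gives $1+r\leq 1$, so by monotonicity of the lc property in the boundary it suffices to show that the larger-boundary pair $(\tilde X,\tilde\Delta+X_1)$ is lc near $X_1$. Since $X_1$ is a Cartier prime divisor in the normal variety $\tilde X$ and is not contained in the support of $\tilde\Delta$, Kawakita's inversion of adjunction reduces this to log canonicity of $(X_1,\mathrm{Diff}_{X_1}\tilde\Delta)$; and the Cartier-ness of $X_1$ in $\tilde X$ identifies the Different with the ordinary restriction $\tilde\Delta|_{X_1}=\Delta_1$. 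The hypothesis then finishes the proof. The main obstacle is the discrepancy bookkeeping in the middle paragraph, together with verifying that $X_1$ is Cartier in the normal $\tilde X$ so that the Different reduces cleanly to the restriction --- both essential for tying the log discrepancy $-r$ of $X_1$ back to the lc hypothesis on $(X_1,\Delta_1)$.
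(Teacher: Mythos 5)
Your proposal is correct and follows essentially the same route as the paper's proof: take the stack-theoretic $a$-weighted blow-up, compute that the log discrepancy of the exceptional divisor $X_1$ over $(X,\Delta)$ is $-r$, and then apply inversion of adjunction to reduce to $(X_1,\Delta_1)$. The only cosmetic difference is that the paper avoids explicitly tracking the multiplicity coefficient $c$ of $\Delta$ along $X_1$ --- it just notes that $K_B+\Delta_B+(1+r)E$ and $p^*(K_X+\Delta)$ are both trivial on $E$ and hence equal --- and it cites the Cartier-divisor form of inversion of adjunction \cite[Proposition 4.5, Theorem 4.9]{Kollarsings} rather than Kawakita's general theorem and the Different, but since $X_1$ is Cartier in $B$ these are interchangeable.
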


This generalizes Koll\'ar's description of which cones
are lc, which (in effect) concerns the case $a_0=\cdots=a_n=1$
\cite[Lemma 3.1]{Kollarsings}.
We do not assume that $X$ is a weighted cone.
The proof is fairly straightforward once one is willing to use
stack-theoretic weighted blow-ups. In retrospect, they are
the right tool for the job.

\begin{proof}
Let $p\colon B\to X$ be the stack-theoretic weighted blow-up of $X\subset A^{n+1}$
at the origin
with the given weights $a=(a_0,\ldots,a_n)$. Explicit coordinate
charts can be found in \cite[section 3.4]{ATW}. The weighted
tangent cone $X_1$ is defined to be the exceptional divisor $E\subset B$;
in particular, $E$ is a hypersurface in the smooth stack
$Y=\P(a_0,\ldots,a_n)=[(A^{n+1}-0)/G_m]$. (Because we view $Y$ as a stack,
we can allow $a_0,\ldots,a_n$ to have a common factor;
that is, the weighted projective space $Y$ need not be well-formed.)

We are assuming that $(X_1,\Delta_1)$ is lc, so in particular
$E=X_1$ is normal. It is a Cartier divisor in the stack $B$.
By the adjunction formula, the canonical class $K_E=(K_B+E)|_E$
is given by $O_E(d-\sum_j a_j)$.
Since $\Delta$ is assumed to be $\Q$-Cartier on $X$,
and $K_X$ is Cartier since $X$ is a hypersurface, it follows that $K_X+\Delta$
is $\Q$-Cartier. Therefore, $K_E+\Delta_1\sim_{\Q} O(r)
\sim_{\Q}-rE|_E$
for some $r\in \Q$.
Write $\Delta_B$ for the birational transform $p^{-1}_*\Delta$.
Then $K_B+\Delta_B+(1+r)E\sim_{\Q}p^*(K_X+\Delta)$,
using that both sides are trivial on $E$.

By \cite[Definition 2.23]{Kollarsings}, it follows that the discrepancy
of $(X,\Delta)$ is given by
$$\discrep(X,\Delta)=\min(-1-r,\discrep(B,(1+r)E+\Delta_B)).$$
(Here $(X,\Delta)$ is lc if and only if $\discrep(X,\Delta)\geq -1$.)
So $(X,\Delta)$ is lc near 0 if $r\leq 0$ (as we assume)
and $(B,(1+r)E+\Delta_B)$ is lc
near $E$. Since $r\leq 0$, it suffices to show that $(B,E+\Delta_B)$
is lc. Since $E$ is a normal Cartier divisor in $B$, inversion
of adjunction says that this follows from $(E,\Delta_B|_E)=
(X_1,\Delta_1)$ being lc \cite[Proposition 4.5, Theorem 4.9]{Kollarsings}.
\end{proof}

The following corollary applies Lemma \ref{lc} to the case
of a ``weighted ordinary'' hypersurface singularity,
meaning that the weighted tangent cone is smooth.
That covers many examples. The singularities in this paper
are more complicated, however, and so we will have to go back
to Lemma \ref{lc}. Corollary \ref{weightedmult}
is a weighted version of an {\it Izumi-type inequality},
meaning a bound of the form
$\lct_0(X,D)\geq c_X/\mult(D)$ (cf.\ \cite[Theorem 3.2,
Remark 3.3]{Li}).

\begin{corollary}
\label{weightedmult}
Let $a_0\geq\cdots\geq a_n$ be positive integers. Let $X$ be a hypersurface
in $A^{n+1}$ over $\C$ that contains the origin.
Let $D$ be an effective $\Q$-Cartier $\Q$-divisor in $X$.
Let $X_1$ be the weighted tangent cone of $X$ at the origin; thus $X_1$
is a hypersurface of some degree $d$ in the weighted projective space
$Y=\P(a_0,\ldots,a_n)$, viewed as a smooth Deligne-Mumford stack over $\C$.
Suppose that the stack $X_1$ is smooth over $\C$.
Let $D_1\subset X_1$
be the weighted tangent cone of $D$. Let $b=-d+\sum_j a_j$.
If $b>0$,
then the log canonical threshold of $(X,D)$ near the origin satisfies:
$$\lct_0(X,D)\geq \frac{\min(a_{n-1}a_n,bd)}{a_0\cdots a_n\mult_a(D)}.$$
\end{corollary}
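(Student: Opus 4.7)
The strategy is to apply Lemma \ref{lc} to the pair $(X, \lambda D)$ with
\[
\lambda := \frac{\min(a_{n-1}a_n,\,bd)}{a_0 \cdots a_n\, \mult_a(D)}.
\]
Writing $m := \mult_a(D)$, the weighted tangent cone $D_1$ is cut out on $X_1$ by forms of weighted degree $m$, so $D_1 \sim_{\Q} O_{X_1}(m)$. Combined with $K_{X_1} = O_{X_1}(d - \sum_j a_j) = O_{X_1}(-b)$, this gives $K_{X_1} + \lambda D_1 \sim_{\Q} O_{X_1}(r)$ with $r = \lambda m - b$. The hypothesis $r \leq 0$ of Lemma \ref{lc} follows immediately from
\[
\lambda m = \frac{\min(a_{n-1}a_n, bd)}{a_0 \cdots a_n} \leq \frac{a_{n-1}a_n}{a_0 \cdots a_n} = \frac{1}{a_0 \cdots a_{n-2}} \leq 1 \leq b,
\]
using the integrality of $b\geq 1$ and $a_i\geq 1$.

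The core of the proof is then to verify that $(X_1, \lambda D_1)$ is log canonical. Since $X_1$ is a smooth Deligne--Mumford stack, the classical lower bound $\lct_q \geq 1/\mult_q$ on smooth varieties transfers via a smooth cover, reducing this to proving the multiplicity estimate
\[
\mult_q(D_1) \leq \frac{a_0 \cdots a_n\, m}{\min(a_{n-1}a_n,\,bd)}
\]
at every closed point $q \in X_1$. This is where the two terms in the minimum play complementary roles. The bound $\mult_q(D_1) \leq a_0 \cdots a_{n-2}\, m$ (handling the case $a_{n-1}a_n \leq bd$) is established locally: working in a stack chart $\{x_i \neq 0\}$ of $Y \simeq [\C^n/\mu_{a_i}]$ containing $q$, the dehomogenized defining polynomial has bounded standard degree in terms of $m$ and the weights. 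The bound $\mult_q(D_1) \leq a_0 \cdots a_n\, m/(bd) = m/(b\,\deg X_1)$ (handling the case $bd \leq a_{n-1}a_n$) is intersection-theoretic: one chooses a smooth curve $L \subset X_1$ through $q$, realized as the complete intersection of $X_1$ with $n-2$ suitably chosen hypersurfaces in $Y$ through $q$, so that $\mult_q(D_1) \leq D_1 \cdot L = m\,(H \cdot L)$, and a minimal choice of $H \cdot L$ yields the estimate.

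The hard part will be carrying out these multiplicity estimates uniformly across all $q \in X_1$, particularly at points on the stacky locus of $Y$, where the local stabilizer interacts with the geometry of $X_1$. There, the auxiliary chart or curve must be chosen to match the weight sequence $a_0 \geq \cdots \geq a_n$ against the local structure of $X_1$, while preserving the smoothness and transversality needed for the two bounds to apply.
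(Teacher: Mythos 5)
Your overall strategy — apply Lemma \ref{lc} after verifying $r \leq 0$ and that $(X_1, \lambda D_1)$ is log canonical — is the right one and matches the paper. However, there are two concrete problems with your execution.

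First, you misidentify the weighted multiplicity. The paper defines $\mult_a(D)$ to be $\deg(D_1)$, the \emph{intersection-theoretic} degree $D_1 \cdot c_1(O(1))^{n-2}$ of the tangent cone as an $(n-2)$-cycle in $Y$. This is not the weighted vanishing order of $D$. If $D_1$ is cut out on $X_1$ by a form of weighted degree $v$, then $D_1 \sim_{\Q} O_{X_1}(v)$, but $\mult_a(D) = \deg(D_1) = vd/(a_0\cdots a_n)$, not $v$. So your claim ``$D_1 \sim_{\Q} O_{X_1}(m)$ where $m = \mult_a(D)$'' is off by a factor of $d/(a_0\cdots a_n)$. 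Your verification of $r\leq 0$ therefore rests on an incorrect identity, even though the conclusion $r\leq 0$ is still true with the correct bookkeeping (one gets $r = \lambda v - b$ with $\lambda v = \min(a_{n-1}a_n, bd)/d \leq b$). This incidentally explains something your argument obscures: the term $bd$ in the minimum is exactly what enforces $r\leq 0$; it is not a ``spare'' term to be subsumed via $b\geq 1$.

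Second, your plan for the log canonicity of $(X_1, \lambda D_1)$ introduces an unnecessary and unfinished case split. Both terms in the minimum are handled by a single application of Johnson--Koll\'ar's multiplicity bound (Theorem \ref{jkbound}): for any point $q$ of $X_1$, $\mult_q(D_1) \leq a_0\cdots a_{n-2}\deg(D_1)$. Then $\lambda\,\mult_q(D_1) \leq \min(a_{n-1}a_n, bd)\cdot a_0\cdots a_{n-2}/(a_0\cdots a_n) = \min(a_{n-1}a_n, bd)/(a_{n-1}a_n) \leq 1$, which is exactly what the smoothness of the stack $X_1$ (together with the multiplicity-$\leq 1$ criterion on a smooth cover) requires. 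In other words, $a_{n-1}a_n$ controls the lc check and $bd$ controls $r\leq 0$ — you do not need a separate ``intersection-theoretic'' argument with curves through $q$ for the $bd<a_{n-1}a_n$ regime, since Johnson--Koll\'ar's bound already gives a stronger inequality in that case. The closing paragraph of your proposal, deferring the stacky-locus estimates as ``the hard part,'' misses that Theorem \ref{jkbound} already handles this uniformly; there is no remaining difficulty once you invoke it correctly.
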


Here, given positive integers $a=(a_0,\ldots,a_n)$, the
{\it weighted multiplicity} $\mult_a(D)$ means $\deg(D_1)$,
the degree of the weighted tangent cone $D_1$
as a codimension-2 cycle in $Y$.

\begin{proof} (Corollary \ref{weightedmult})
Let $c=\min(a_{n-1}a_n,bd)/(a_0\cdots a_n\deg(D_1))$.
We want to show that $(X,cD)$ is lc near 0. Let $\Delta=cD$ 
and $\Delta_1=cD_1$ its weighted tangent cone.
By Lemma \ref{lc}, it suffices to show
that $K_{X_1}+\Delta_1
\sim_{\Q} O_{X_1}(r)$ with $r\leq 0$ and that
$(X_1,\Delta_1)$ is lc.

By the adjunction formula, we have $-K_{X_1}=O_{X_1}(b)$.
To show that $r\leq 0$, it is equivalent to show that
$\deg(\Delta_1)=\Delta_1\cdot c_1(O(1))^{n-2}
\leq (-K_{X_1})\cdot c_1(O(1))^{n-2}$,
where the intersection numbers are computed
on the $(n-1)$-dimensional stack $X_1\subset Y$.
So $(-K_{X_1})\cdot c_1(O(1))^{n-2}=bd/(a_0\cdots a_n)$.
Using this plus the fact that
$\Delta_1=cD_1$, the inequality above holds if
$c\leq bd/(a_0\cdots a_n\deg(D_1))$.
That holds by our definition of $c$.

It remains to show that $(X_1,\Delta_1)$ is lc.
We define the multiplicity at a point
of a irreducible closed substack (or an effective algebraic cycle)
in weighted projective space $Y$ to be the multiplicity at a corresponding
point of its inverse image in any orbifold chart $A^n\to [A^n/\mu_{a_i}]
\cong \{ x_i\neq 0\}\subset Y$. (This is independent of $i$,
because the different orbifold charts are \'etale-locally
isomorphic.) Johnson and Koll\'ar proved the following bound
\cite[Proposition 11]{JKEinstein}. (The last sentence of Theorem
\ref{jkbound} is not stated in their paper, but it is immediate
from their argument.)

\begin{theorem}
\label{jkbound}
Let $a_0\geq\cdots\geq a_n$ be positive integers. Let $M$
be an irreducible closed substack of dimension $r$ in the stack
$\P^n(a_0,\ldots,a_n)$. Then the multiplicity of $M$
at every point is at most $(a_0\cdots a_r)\deg(M)$.
If $M$ is not contained in the hyperplane $x_n=0$,
then this bound can be improved to $(a_0\cdots a_{r-1}a_n)\deg(M)$.
\end{theorem}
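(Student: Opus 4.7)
The plan is to adapt the classical proof that the multiplicity of an irreducible projective variety is at most its degree, via linear projection to a lower-dimensional weighted projective stack. Given $p\in M$ with $\dim M=r$, I would find $r+1$ indices $\{i_0,\ldots,i_r\}\subset\{0,\ldots,n\}$ such that the weighted linear projection
$$\pi\colon\P^n(a_0,\ldots,a_n)\dashrightarrow \P^r(a_{i_0},\ldots,a_{i_r}),\qquad (x_0,\ldots,x_n)\mapsto(x_{i_0},\ldots,x_{i_r}),$$
is defined at $p$ and restricts to a dominant rational map on $M$. Such a subset exists because the affine cone $\tilde M\subset A^{n+1}$ has dimension $r+1$, its coordinate ring is generated by $x_0,\ldots,x_n$ as a $\C$-algebra, and extracting a transcendence basis from this generating set yields $r+1$ of the $x_j$'s that are algebraically independent on $\tilde M$; the subset may be chosen to contain a prescribed index $i$ with $p_i\neq 0$, ensuring $\pi$ is defined at $p$.

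Granting such a projection, $\pi|_M$ is a dominant generically finite map of stacks onto $\P^r(a_{i_0},\ldots,a_{i_r})$ with $\pi^{*}O(1)=O(1)$. The projection formula
$$\deg M=\int_M c_1(O(1))^r=\deg(\pi|_M)\cdot\int_{\P^r(a_{i_0},\ldots,a_{i_r})}c_1(O(1))^r=\frac{\deg(\pi|_M)}{a_{i_0}\cdots a_{i_r}}$$
gives $\deg(\pi|_M)=(a_{i_0}\cdots a_{i_r})\deg M$. In an orbifold chart at $p$ the target is regular at $\pi(p)$, so the pullbacks of local parameters form a system of parameters $J=\pi^{*}\mathfrak{m}_{\pi(p)}\subseteq\mathfrak{m}_{p}$ in $\mathcal{O}_{M,p}$. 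The standard Hilbert--Samuel inequalities then give
$$\mult_p M=e(\mathfrak{m}_p,\mathcal{O}_{M,p})\leq e(J,\mathcal{O}_{M,p})\leq \dim_{k(\pi(p))}\mathcal{O}_{M,p}/J\leq\deg(\pi|_M),$$
the last step bounding the local degree at $p$ by the global degree of $\pi|_M$.

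Since $a_0\geq\cdots\geq a_n$, every $(r+1)$-subset satisfies $a_{i_0}\cdots a_{i_r}\leq a_0\cdots a_r$, yielding the main bound $\mult_p M\leq (a_0\cdots a_r)\deg M$. For the improved bound, the hypothesis $M\not\subset\{x_n=0\}$ means $x_n$ is a nonzero (hence transcendental) element of the homogeneous coordinate ring of $\tilde M$, so the transcendence basis can be chosen to include $x_n$, putting $n$ in the index set and sharpening the product to $a_0\cdots a_{r-1}a_n$. The main obstacle I expect is handling the finiteness of $\pi|_M$ near $p$: if $\pi|_M$ has a positive-dimensional fiber through $p$, the local-degree step breaks, so for some points one must adjust the choice of indices, shrink to a suitable open, or pass to a Stein factorization. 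A short case analysis based on which coordinates vanish at $p$ should suffice, but it is the most delicate piece.
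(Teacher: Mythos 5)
The paper does not actually prove Theorem \ref{jkbound}; it cites it as \cite[Proposition 11]{JKEinstein}, adding only the remark that the final sentence ``is immediate from their argument.'' So there is no paper-internal proof to compare against, and one should really compare with Johnson--Koll\'ar's argument, which, like yours, is a projection argument. The overall shape of your proposal is therefore on the right track. However, two points need attention.

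First, the gap you flag at the end is a genuine one and is in fact the crux. It is not true that for every irreducible $M$ and every $p\in M$ one can choose coordinates $\{i_0,\ldots,i_r\}$ (containing a prescribed index with $p_i\neq 0$) so that the coordinate projection $\pi|_M$ is finite over a neighborhood of $\pi(p)$: the fiber $\pi^{-1}(\pi(p))\cap M$ can be positive-dimensional for every admissible choice of indices, for instance once $M$ meets enough coordinate linear subspaces (in the unweighted setting already, a curve through all coordinate vertices of $\P^3$ has no coordinate projection to $\P^1$ that is a morphism on $M$, though one can still salvage that $r=1$ case; in higher $r$ the problem compounds). When the fiber through $p$ is positive-dimensional, $J=\pi^*\mathfrak m_{\pi(p)}$ is not $\mathfrak m_p$-primary, the Hilbert--Samuel step breaks, and the projection formula needs to be reinterpreted (the rational map is not proper on the locus where it is defined, so the degree identity as you wrote it becomes an inequality after resolving indeterminacy, with the exceptional contribution on the ``wrong'' side a priori). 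A ``short case analysis on which coordinates vanish at $p$'' does not obviously suffice; one needs either to pass to more general weighted-homogeneous forms (not just coordinates), or to argue via a Noether-normalization-type statement compatible with the grading, or to follow Johnson--Koll\'ar's actual route. This is precisely what your proposal leaves open.

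Second, a smaller technical issue: the inequality $\dim_{k(\pi(p))}\mathcal O_{M,p}/J\leq\deg(\pi|_M)$ is false in general when $\mathcal O_{M,p}$ is not Cohen--Macaulay — the colength of a parameter ideal can exceed the generic degree of a finite map. The correct chain is $e(\mathfrak m_p)\leq e(J,\mathcal O_{M,p})=\operatorname{rank}_{\mathcal O_{\P^r,\pi(p)}}\mathcal O_{M,p}\leq\deg(\pi|_M)$, using the associativity formula for Hilbert--Samuel multiplicity over the regular local ring at $\pi(p)$ rather than colength. That fix is routine, but you should not route the bound through $\dim\mathcal O_{M,p}/J$.
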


Returning to the proof of Corollary \ref{weightedmult},
Theorem \ref{jkbound} gives that
the $(n-2)$-dimensional cycle $D_1$ in $Y$
has multiplicity at every point at most
$(a_0\cdots a_{n-2})\deg(D_1)$.

Now use the assumption
that $X_1$ is a smooth stack. Then, for $0\leq i\leq n$,
the inverse image $X_{1,i}$ of $X_1$ in the $i$th orbifold chart
is a smooth hypersurface in $A^n$. For a smooth variety $S$ with an effective
$\Q$-divisor $T$, the pair $(S,T)$ is lc if $T$ has multiplicity at most 1
at every point \cite[Claim 2.10.4]{Kollarsings}. So the stack
$(X_1,cD_1)$ is lc if $cD_{1}$ has multiplicity at most 1
at each point. By the previous paragraph,
it suffices to show that $c(a_0\cdots a_{n-2})\deg(D_1)\leq 1$.
This holds by the definition of $c$. So $(X_1,\Delta_1)$ is lc
and hence $(X,\Delta)$ is lc.
\end{proof}

The proof of Corollary \ref{weightedmult} works by bounding
the unweighted multiplicity of $D$
in $A^{n+1}$. At several points in this paper,
it works better to bound a {\it weighted }multiplicity of $D$
at the worst point of $X$,
where information would be lost by going through Theorem \ref{jkbound}.
The idea is that $D$ is given to us as a subspace of a weighted
projective space; so we should use those weights
in analyzing the singularities of $D$, as follows.

\begin{lemma}
\label{someweights}
Let $a_0,\ldots,a_{n+1}$ be positive integers (in any order).
Let $S$ be an irreducible closed substack of a weighted
projective stack $Y=\P^{n+1}(a_0,\ldots,a_{n+1})$.
In coordinates $x_{n+1}=1$, $S$ corresponds to a subvariety of $A^{n+1}$.
Consider the weights $a_0,\ldots,a_n$ on $A^{n+1}$.
Then the weighted multiplicity of $S$ at the origin in $A^{n+1}$
satisfies
$$\mult_a(S)\leq a_{n+1}\deg(S).$$
\end{lemma}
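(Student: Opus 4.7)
The plan is to produce a $G_m$-degeneration of $S$ inside $Y$ whose special fiber contains the weighted projective closure of the tangent cone, and then to compare degrees by intersecting with the divisor $H=\{x_{n+1}=0\}$, exploiting the relation $[H]=a_{n+1}\,c_1(O_Y(1))$.

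Set $p=[0{:}\cdots{:}0{:}1]$ and $U=Y\setminus H=[A^{n+1}/\mu_{a_{n+1}}]$, and introduce a $G_m$-action $\phi$ on the stack $Y$ by $\phi_t([x_0{:}\cdots{:}x_{n+1}])=[t^{a_0}x_0{:}\cdots{:}t^{a_n}x_n{:}x_{n+1}]$. A direct check against the defining $G_m$-quotient $(x_0,\ldots,x_{n+1})\sim(s^{a_0}x_0,\ldots,s^{a_{n+1}}x_{n+1})$ shows $\phi$ is well-defined, fixes $H$ pointwise and the point $p$, and on $U$ restricts to the weighted rescaling $(x_0,\ldots,x_n)\mapsto(t^{a_0}x_0,\ldots,t^{a_n}x_n)$ of $A^{n+1}$. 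Taking the closure of $\{(\phi_t(y),t):y\in S,\,t\in G_m\}$ produces a flat family $\mathcal{S}\to A^1$ in $Y\times A^1$ with general fiber $\phi_t(S)\cong S$; by flatness, the special fiber $S_0:=\mathcal{S}_0$ satisfies $[S_0]=[S]$ in the rational Chow group of $Y$. By the standard description of flat limits under weighted $G_m$-actions, $S_0\cap U$ is the weighted tangent cone $S_1^{\mathrm{aff}}\subset A^{n+1}$ of $S\cap U$ at the origin with weights $(a_0,\ldots,a_n)$. Writing $W\subset Y$ for the closure of $S_1^{\mathrm{aff}}$ and separating the components of $S_0$ that meet $U$ from those contained in $H$, I obtain $[S_0]=[W]+[\Delta]$, where $\Delta$ is an effective $r$-cycle supported in $H$ (here $r=\dim S$).

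The crucial observation is that $W$ is cut out inside $Y$ by the weighted-homogeneous polynomials $\mathrm{in}_a(f)$ for $f$ in the ideal of $S\cap U$; these polynomials involve only $x_0,\ldots,x_n$. So scheme-theoretically $W\cap H = S_1$ (the same equations, read inside $H=\P^n(a_0,\ldots,a_n)$), giving $[W]\cdot[H]=[S_1]$ as cycle classes. I then evaluate $\int_Y[S]\cdot[H]\cdot c_1(O_Y(1))^{r-1}$ two ways. Using $[H]=a_{n+1}c_1(O_Y(1))$ directly yields $a_{n+1}\deg(S)$. Using $[S]=[W]+[\Delta]$, the $W$-term contributes $\deg(S_1)$, while the $\Delta$-term contributes $a_{n+1}\int_\Delta c_1(O_H(1))^r\geq 0$ via excess intersection for $\Delta\subset H$ (noting $O_Y(H)|_H=O_H(a_{n+1})$). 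Combining gives $a_{n+1}\deg(S)\geq\deg(S_1)=\mult_a(S)$.

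The main point to nail down is the scheme-theoretic identity $W\cap H=S_1$, which rests on the concrete description of $W$ as a weighted cone in $Y$ with vertex $p$ and base $S_1$, cut out by equations involving only $x_0,\ldots,x_n$; given that, the remaining ingredients --- well-definedness of $\phi$ on the stack, preservation of Chow class under flat degeneration, and the divisor relation $[H]=a_{n+1}c_1(O_Y(1))$ --- are routine.
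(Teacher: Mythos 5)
Your strategy is genuinely different from the paper's, but there is a real orientation error that must be fixed before the argument proves the stated inequality.

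The issue is the direction of the $G_m$-degeneration. With $\phi_t(x_0,\ldots,x_n)=(t^{a_0}x_0,\ldots,t^{a_n}x_n)$ on $U\cong A^{n+1}$ and the family $\mathcal{S}=\overline{\{(\phi_t(y),t)\}}$, the fiber $S_0$ at $t=0$ is \emph{not} the weighted tangent cone of $S\cap U$ at the origin. If $f=f_d+f_{d+1}+\cdots$ (graded by $a$-weight, $f_d\neq0$) vanishes on $S\cap U$, then $\phi_t(V(f))=V(f(t^{-a}x))$ and $t^{d_{\max}}f(t^{-a}x)\to f_{d_{\max}}$ as $t\to0$: you get the \emph{top}-degree initial forms, not the bottom ones. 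Concretely, in $\P^2$ with all weights $1$ and $S=V(x_1x_2-x_0^2)$, one has $\phi_t(S)=V(x_0^2-tx_1x_2)$, so $S_0=V(x_0^2)$, whereas the tangent cone of $S\cap U=V(x_1-x_0^2)$ at the origin is $V(x_1)$. (Geometrically, your $S_0$ is the cone over $S\cap H$ with vertex $p$, not the projectivized tangent cone at $p$.) As written, the key identification $S_0\cap U=S_1^{\mathrm{aff}}$ is false, and the inequality your degree computation produces, while true, bounds a different quantity than $\mult_a(S)$.

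The fix is to reverse the orientation: take the limit as $t\to\infty$, or equivalently use the action $(x_0,\ldots,x_n)\mapsto(t^{-a_0}x_0,\ldots,t^{-a_n}x_n)$ on $U$ (which, via the projective rescaling, is $[x_0:\cdots:x_n:t^{a_{n+1}}x_{n+1}]$ on $Y$). Then $S_0\cap U$ is indeed the $a$-weighted tangent cone of $S\cap U$ at $0$, the decomposition $[S_0]=[W]+[\Delta]$ with $\Delta$ effective and supported in $H$ holds, the scheme-theoretic identity $W\cap H=S_1$ gives $[W]\cdot[H]=[S_1]$ since $W\not\subset H$ (it contains $p$) and $H$ is Cartier on the stack, and the degree bookkeeping via $[H]=a_{n+1}c_1(O_Y(1))$ and the excess-intersection nonnegativity of the $\Delta$-term go through. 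So the argument is salvageable by a one-line change, but it is wrong as stated.

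For comparison, the paper's proof is considerably shorter: it slices the affine cone $C(S)\subset A^{n+2}$ by $\{x_{n+1}=t\}$, notes that the $a$-weighted multiplicity of the slice at $(0,\ldots,0,t)$ equals $\mult_a(S)$ for $t\neq0$ and equals $\deg(S\cap H)=a_{n+1}\deg(S)$ at $t=0$, and invokes upper semicontinuity of weighted multiplicity. That argument avoids Chow-theoretic bookkeeping and the flat-limit analysis entirely, at the cost of leaning on semicontinuity of the (weighted) multiplicity along the family; your (corrected) route makes the intersection-theoretic structure explicit and might generalize more readily, but it requires careful attention to the orientation of the degeneration, to flatness of $\mathcal{S}\to A^1$, and to the properness of the intersection $W\cap H$.
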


\begin{proof}
Consider the family of hypersurfaces $\{ x_{n+1}=t\} $ in $A^{n+2}$
as $t$ varies. Write $C(S)$ for the affine cone over $S$
in $A^{n+2}$. Consider the weighted multiplicity of
$C(S)\cap\{ x_{n+1}=t\}$ at the point $(0,\ldots,0,t)$.
For $t\neq 0$, this is equal to $\mult_a(S)$. For $t=0$,
this is equal to $\deg(S\cap \{ x_{n+1}=0\})=a_{n+1}\deg(S)$
if $S$ is not contained in the hyperplane
$\{ x_{n+1}=0\}$. Then upper semicontinuity
of the weighted multiplicity gives that $\mult_a(S)\leq
a_{n+1}\deg(S)$. If $S$ is contained in the hyperplane
$\{ x_{n+1}=0\}$, then $S$ does not contain the point
$[0,\ldots,0,1]$; so $\mult_a(S)=0$ and the inequality
again holds.
\end{proof}

\section{The log canonical threshold for a certain singular hypersurface}

In order to show that the klt Fano varieties in Theorem \ref{fano}
are exceptional, we need to analyze their worst singular point,
as follows. Specifically, we need to estimate the log canonical
threshold of any divisor on this singular hypersurface. The proof
involves an induction on these singularities
of different dimensions.

\begin{lemma}
\label{singularity}
Let $n$ be a positive integer.
Let $X_n$ be the hypersurface in $A^{n+1}$ defined by
$0=x_0^2+x_1^3+\cdots+x_n^{s_n}+x_1\cdots x_n$.
Let $D$ be an effective $\Q$-Cartier $\Q$-divisor
in $X_n$. Let $c_i=(s_{n+1}-1)/s_i$ for $0\leq i\leq n$.
Write $\mult_c(D)$ for the $c$-weighted multiplicity of $D$
at the origin.
Then the log canonical threshold of $D$ in $X$ near the origin satisfies
$\lct_0(X_n,D)\geq 1/\mult_c(D)$ if $n=1$ and
$$\lct_0(X_n,D)\geq \frac{2}
{s_n^{n-1}(s_n+1)^2(s_n-1)^{n-3}\mult_c(D)}$$
if $n\geq 2$.
\end{lemma}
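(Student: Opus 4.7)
The plan is to argue by induction on $n$.

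\textbf{Base case $(n=1)$.} Since $\partial f/\partial x_1 = 3x_1^2 + 1$ equals $1$ at the origin, $X_1 = V(x_0^2 + x_1^3 + x_1)$ is smooth there, so $\lct_0(X_1, D) = 1/\mult_0(D)$ by the standard formula on smooth varieties. Near the origin, $X_1$ is parametrized by $x_0$, with $x_1 = -x_0^2 + O(x_0^4)$; restricting a function of $c$-weight $w$ to $X_1$ yields a series in $x_0$ of order at most $w$, so $\mult_0(D) \le \mult_c(D)$ and the desired inequality follows.

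\textbf{Inductive step $(n \ge 2)$.} I take the stack-theoretic weighted blow-up $p\colon B \to A^{n+1}$ at the origin with weights $c = (c_0, \dots, c_n)$. The $c$-weights of the monomials in $f$ are $s_{n+1} - 1$ for each of $x_0^2, x_1^3, \dots, x_n^{s_n}$ and $\sum_{i \ge 1} c_i = (s_{n+1}-3)/2$ for $x_1 \cdots x_n$. So the $c$-weighted tangent cone of $X_n$ is $X_1^c = V(x_1 \cdots x_n) \subset Y = \P(c_0, \dots, c_n)$, the reducible union of the coordinate hyperplanes $V(y_1), \dots, V(y_n)$. In the chart $x_0 = u^{c_0}$, $x_j = u^{c_j} y_j$ for $j \ge 1$, the strict transform $\tilde X_n$ is locally defined by $u^N Q(y) + y_1 \cdots y_n = 0$, where $N = (s_{n+1}+1)/2$ and $Q = 1 + \sum y_j^{s_j}$ is a unit near $y=0$. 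The exceptional divisor on $\tilde X_n$ decomposes as $\sum_{j=1}^n E_j$ where $E_j = \tilde X_n \cap \{y_j = 0\}$; combining $K_B = p^* K_{A^{n+1}} + (\sum c_i - 1)E$ with adjunction shows that each $E_j$ has discrepancy $(s_{n+1}-3)/2$ over $X_n$. Moreover, $\tilde X_n$ is smooth off the intersections $E_{j_1} \cap \dots \cap E_{j_k}$ (with $k \ge 2$), where it is analytically equivalent to the chain singularity $V(u^N + y_{j_1} \cdots y_{j_k})$.

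\textbf{Combining discrepancies and the main obstacle.} Because $X_1^c$ is reducible, Lemma \ref{lc} cannot be applied directly. My plan is to treat $(X_n, \lambda D)$ in two stages. First, the discrepancy condition at each $E_j$, namely $(s_{n+1}-3)/2 - \lambda\,\mathrm{ord}_{E_j}(p^* D) \ge -1$, gives a bound on $\lambda$ in terms of $\mult_c(D)$ via the identification $\mathrm{ord}_{E_j}(p^* D) = w$ (the lowest $c$-weight of a defining function of $D$) together with the identity $\mult_c(D) = w\,(s_{n+1}-3)/(2\,s_n^n (s_n-1)^n)$. Second, at each deeper intersection stratum, a further weighted blow-up of the chain singularity $V(u^N + y_{j_1} \cdots y_{j_k})$ combined with a nested induction on the chain length $k$ yields an lct bound on the restricted pair there. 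Combining both via the log pullback formula should produce the stated lower bound. The principal difficulty is the reducibility of the tangent cone, which forces a separate analysis along each of the $n$ exceptional components $E_j$ and along their mutual intersections; a secondary challenge is showing, via the Sylvester identities $s_n(s_n-1) = s_{n+1} - 1$ and $(s_n-2)(s_n+1) = s_{n+1} - 3$, that the nested inductive bounds at the chain singularities $V(u^N + y_{j_1}\cdots y_{j_k})$ combine to give precisely the closed-form expression $2/(s_n^{n-1}(s_n+1)^2(s_n-1)^{n-3}\mult_c(D))$ rather than merely the weaker estimate coming from the $E_j$ alone.
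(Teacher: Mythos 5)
Your inductive step starts with the wrong weighted blow-up. You take the stack-theoretic blow-up with the \emph{original} weights $c_i = (s_{n+1}-1)/s_i$, and as you correctly compute, the resulting $c$-weighted tangent cone of $X_n$ is $V(x_1\cdots x_n)$, a reducible union of $n$ coordinate hyperplanes. This is not merely an inconvenience: it is non-normal, so Lemma~\ref{lc} does not apply, and (as you acknowledge) your proposed work-around---a separate discrepancy analysis along each $E_j$, plus a nested induction on chain singularities $V(u^N + y_{j_1}\cdots y_{j_k})$ at the intersection strata, glued via the log-pullback formula---is left entirely unexecuted. You explicitly flag as a ``secondary challenge'' the need to show that these nested bounds combine to yield the factor $(s_n+1)^2$ in the stated bound; this is not a secondary issue but the heart of the problem, and your outline gives no indication of how it would be resolved. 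As written, the proposal identifies the obstruction but does not overcome it.

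The paper's proof sidesteps reducibility altogether by \emph{changing the last weight}: it replaces $c_n = s_n-1$ by $w_n = s_n(s_n+1)/2$ (equivalently, after rescaling by $s_n$, it uses weights $b_i = (s_n-1)/s_i$ for $i<n$ and $b_n = (s_n+1)/2$). This value is precisely calibrated so that the monomial $x_n^{s_n}$ drops out of the weighted tangent cone while $x_1\cdots x_n$ and the Fermat monomials $x_0^2,\dots,x_{n-1}^{s_{n-1}}$ all remain; the resulting tangent cone is the \emph{irreducible}, klt hypersurface $X_{n-1}$ of degree $s_n-1$ in $\P(b_0,\dots,b_n)$, whose worst singular point (at $[0,\dots,0,1]$) is exactly the $(n-1)$-dimensional instance of the same singularity. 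This makes Lemma~\ref{lc} applicable and sets up a clean induction; the two factors of $(s_n+1)$ in the bound come directly from $b_n = (s_n+1)/2$, once via Lemma~\ref{someweights} (bounding $\mult_b$ of the tangent cone of $D$ at the worst point) and once via the $r\le 0$ condition in Lemma~\ref{lc}. The increase $w_n > c_n$ also immediately gives $\mult_w(D) \le \mult_c(D)$, which is what ties the bound back to the $c$-weighted multiplicity in the statement. Without this weight modification, you are attempting a much harder and, in your write-up, unfinished argument.
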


\begin{proof}
For $n=1$, $X_1$ is a smooth curve, and the origin in $X_1$
has $c$-multiplicity $1$. So every effective
$\Q$-Cartier $\Q$-divisor
$D$ on $X_1$ has $\lct_0(X,D)\geq 1/\mult_c(D)$, as we want.

For any positive integer $n$, write $\lambda_n$ for the constant
in the lemma, so we are trying to show that $\lct_0(X_n,D)
\geq \lambda_n/\mult_c(D)$. (In particular, let $\lambda_1=1$.)
Suppose that $n\geq 2$ and the inequality
holds for $n-1$ in place of $n$. Let $D$ be an effective
$\Q$-Cartier $\Q$-divisor, and let $c_i=(s_{n+1}-1)/s_i$
for $0\leq i\leq n$. We have to show that if $\mult_c(D)=1$,
then $(X_n,\lambda_nD)$ is lc near the origin.

Consider the modified weights $w_i=c_i$ for $0\leq i\leq n-1$
and $w_n=s_n(s_n+1)/2$. Since $w_n>c_n$, we have
$\mult_w(D)\leq \mult_c(D)=1$. To simplify the numbering,
let $b_i=w_i/s_n$ for all $0\leq i\leq n$. Since $D$ has dimension $n-1$,
we have $\mult_b(D)=s_n^{n-1}\mult_w(D)\leq s_n^{n-1}$.
Here $b_i=(s_n-1)/s_i$ for $0\leq i\leq n-1$
and $b_n=(s_n+1)/2$. Note that $b_n>b_0>\cdots >b_{n-1}$,
contrary to our usual ordering. Finally, let $e=s_n-1$.

The reason for considering the
weights $b_0,\ldots,b_n$ on $A^{n+1}$ is that the weighted
tangent cone of the hypersurface $X_n$ at the origin is klt;
namely, it is the hypersurface $X_{n-1}$ in $Y:=\P^n(b_0,\ldots,b_n)$
of degree $e$
defined by $0=x_0^2+x_1^3+\cdots+x_{n-1}^{s_{n-1}}+x_1\cdots x_n$.
(Here only the monomial $x_n^{s_n}$ has disappeared.)
The stack $X_{n-1}$ is smooth outside the point
$[x_0,\ldots,x_n]=[0,\ldots,0,1]$. The singularity at that point,
in coordinates $x_n=1$, is
$$X_{n-1}=\{0=x_0^2+\cdots+x_{n-1}^{s_{n-1}}+x_1\cdots x_{n-1}\}
\subset A^n,$$
in agreement with the notation of this lemma.

Let $D_{n-1}\subset X_{n-1}$ (inside $Y$) be the $b$-weighted
tangent cone of $D$. Since $D$ is $\Q$-Cartier,
$D_{n-1}$ is $\Q$-linearly equivalent to a rational
multiple of $O_{X_{n-1}}(1)$.
We have $\deg(D_{n-1})=\mult_b(D)\leq s_n^{n-1}$,
as shown above. By Lemma \ref{someweights},
it follows that in coordinates $\{x_n=1\}\cong A^n$,
and with weights $b_0,\ldots,b_{n-1}$ on $A^n$, we have
\begin{align*}
\mult_b(D_{n-1})&\leq b_n\deg(D_{n-1})\\
&\leq s_n^{n-1}(s_n+1)/2,
\end{align*}
using that $b_n=(s_n+1)/2$.
Here $b_i=(s_n-1)/s_i$ for $0\leq i\leq n-1$,
and so our inductive assumption gives that
$\lct_0(X_{n-1},D_{n-1})\geq \lambda_{n-1}/\mult_b(D_{n-1})
\geq 2\lambda_{n-1}/(s_n^{n-1}(s_n+1))$. This number
is at least $\lambda_n$. Indeed, $\lambda_n\sim 2/s_n^{2n-2}$,
and so $\lambda_n$ divided by $2\lambda_{n-1}/(s_n^{n-1}(s_n+1))$
is asymptotic to $1/2$. So it is less than 1 for large $n$,
and a bit more calculation shows that it is less than 1
for all $n\geq 2$. (For $n=2$, using that $\lambda_1=1$,
this ratio is $3/4$, and for $n=3$ this ratio
is $28/33$.) Thus we have shown that
the pair $(X_{n-1},\lambda_nD_{n-1})$
is lc near the point $[x_0,\ldots,x_n]=[0,\ldots,0,1]$.

Since the stack $X_{n-1}$ is smooth outside that point,
we check easily that the pair $(X_{n-1},\lambda_nD_{n-1})$
is lc on all of $X_{n-1}$.
Namely, by Theorem \ref{jkbound}, $D_{n-1}$ has (unweighted)
multiplicity at every point at most $b_0\cdots b_{n-3}b_n\deg(D_{n-1})
\leq b_0\cdots b_{n-3}b_ns_n^{n-1}$. Here $b_0\cdots b_{n-3}
=(s_n-1)^{n-2}/(s_{n-2}-1)\sim s_{n-2}^{4n-9}$,
$b_n=(s_n+1)/2\sim s_{n-1}^4/2$,
and $\lambda_n\sim 2/(s_{n-2}^{8n-8})$.
So $u_n:=\lambda_nb_0\cdots b_{n-3}b_n
s_n^{n-1}\sim 1/s_{n-2}$.
This is less than 1 for $n$ large. With a bit more calculation,
we have $u_n\leq 1$ for all $n\geq 2$ (for example, $u_2=3/4$).
So $\lambda_nD_{n-1}$ has multiplicity at most 1
at every point, for each $n\geq 2$.
Therefore, the pair $(X_{n-1},\lambda_nD_{n-1})$
is lc at points other than $[0,\ldots,0,1]$ as well as
at that point.

By Lemma \ref{lc}, the pair $(X_n,\lambda_nD)$ is lc near the origin
if $(X_{n-1},\lambda_nD_{n-1})$ is lc (as we have shown)
and $K_{X_{n-1}}+\lambda_nD_{n-1}\sim_{\Q} O_{X_{n-1}}(r)$ with $r\leq 0$.
Let us show that $r\leq 0$.
By the adjunction formula, we have $-K_{X_{n-1}}=O_{X_{n-1}}(-e
+\sum_j b_j)=O_X((s_n-1)/2)$.
To show that $r\leq 0$, it is equivalent to show that
$\lambda_n\deg(D_{n-1})=\lambda_nD_{n-1}\cdot c_1(O(1))^{n-2}
\leq (-K_{X_1})\cdot c_1(O(1))^{n-2}$.
Here $(-K_{X_1})\cdot c_1(O(1))^{n-2}=(s_n-1)e/(2b_0\cdots b_n)
=1/((s_n-1)^{n-3}(s_n+1))$.
As a result, the inequality above holds if
$\lambda_n(s_n-1)^{n-3}(s_n+1)\deg(D_{n-1})\leq 1$.
We have $\deg(D_{n-1})=\mult_b(D)\leq s_n^{n-1}(s_n+1)/2$.
So the inequality above holds if
$\lambda_ns_n^{n-1}(s_n+1)^2(s_n-1)^{n-3}/2\leq 1$.
In fact, $\lambda_n$ has been chosen to make equality hold here.
Therefore, the pair $(X_n,
\lambda_nD)$ is lc
near the origin, as we want.
\end{proof}

\section{Exceptionality of the klt Fano example in even dimensions}

In order to show that the klt Fano variety $X$ in Theorem \ref{fano}
is exceptional, it suffices to show that its global log canonical
threshold is greater than 1. It turns out that we can compute
$\glct(X)$ exactly, and
it is doubly exponentially large in terms of $n=\dim(X)$.
In this section, we consider
the case where $n$ is even, which turns out to be simpler.

\begin{theorem}
\label{evenex}
For each even number $n$ at least 2, the klt Fano $n$-fold $X$
in Theorem \ref{fano} is exceptional.
More strongly, the global log canonical threshold of $X$ is equal
to $(s_n-2)a_{n+1}/(s_n-1)\sim s_n^2/4$.
In particular, this is greater than 1.
\end{theorem}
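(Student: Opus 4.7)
Write $\lambda_0 := (s_n-2)a_{n+1}/(s_n-1)$. I use the valuation formula $\glct(X) = \inf_E A_X(E)/S_E$, where $S_E := \sup_{D \geq 0,\, D \sim_{\Q} -K_X} \operatorname{ord}_E(D)$, by exhibiting a specific divisorial valuation $E_0$ with $A_X(E_0)/S_{E_0} = \lambda_0$ and showing every other divisorial $E$ gives a ratio at least $\lambda_0$.

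\emph{Upper bound.} The key point is $P := [0,\ldots,0,1,0] \in X$, which is quasi-smooth because $\partial f/\partial x_{n+1}(P) = 1 \neq 0$. In a smooth \'etale neighborhood of $P$ in $X$ (obtained by setting $x_n = 1$ and passing to the $\mu_{a_n}$-cover), the implicit function theorem gives $x_{n+1} \equiv -(x_0^{s_0} + \cdots + x_{n-1}^{s_{n-1}})$ modulo higher-order terms, identifying the neighborhood with $A^n$ via coordinates $(x_0, \ldots, x_{n-1})$. Let $E_0$ be the exceptional divisor of the weighted blow-up of this $A^n$ at the origin with weights $w_i := (s_n-1)/s_i$. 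Then $A_X(E_0) = \sum w_i = s_n - 2$ by the Sylvester identity $\sum 1/s_i = 1 - 1/(s_n-1)$. A monomial computation shows $\operatorname{ord}_{E_0}(f|_X) \leq (s_n-1)k/a_{n+1}$ for a monomial $f$ of $Y$-weighted degree $k$, with equality for $f = x_{n+1}^{k/a_{n+1}}$; hence $S_{E_0} = (s_n-1)/a_{n+1}$ and $A_X(E_0)/S_{E_0} = \lambda_0$. Concretely, $\lct(X, D_0) = \lambda_0$ for the divisor $D_0 := (1/a_{n+1})\{x_{n+1} = 0\} \cap X$: its local model at $P$ is $(1/a_{n+1})$ times the Brieskorn hypersurface $\{x_0^{s_0} + \cdots + x_{n-1}^{s_{n-1}} = 0\}$ in $A^n$, which has lct $(s_n-2)/(s_n-1)$ at the origin by the classical diagonal formula.

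\emph{Lower bound.} I partition divisorial valuations by their center on $X$. If $E$ is a prime divisor on $X$ itself, then $A_X(E) = 1$ and $S_E = 1/c$ where $E$ has class $O_X(c)$; since $\min_i a_i = a_{n+1}$ forces $H^0(X, O_X(k)) = 0$ for $k < a_{n+1}$ via the cohomology long exact sequence, one has $c \geq a_{n+1}$, so $A_X(E)/S_E \geq a_{n+1} > \lambda_0$. If $E$ is centered at the non-quasi-smooth point $[0,\ldots,0,1]$, work in the chart $x_{n+1} = 1$ modulo $\mu_{a_{n+1}}$ to reach the singularity $X_n$ of Lemma \ref{singularity}; combining that lemma with $\operatorname{mult}_c(D|_{X_n}) \leq a_{n+1}\deg(D) = d/(a_0\cdots a_n)$ (from Lemma \ref{someweights} and the componentwise inequality $c \leq a$) yields a local log canonical threshold bound of order $(s_n/2)^{2n}$, vastly exceeding $\lambda_0 \sim (s_n/2)^2$. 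If $E$ is centered at another quasi-smooth point---the critical case being at $P$---then for a weighted blow-up at $P$ with weights $w'_0, \ldots, w'_{n-1}$, set $M := \max_i s_i w'_i$ and $m := \min_i s_i w'_i$; the same monomial argument gives $S_{E'} = \max(M/d,\, m/a_{n+1})$. When $m/a_{n+1} \geq M/d$, the bound $A_{E'} \geq m(s_n-2)/(s_n-1)$ yields $A_{E'}/S_{E'} \geq \lambda_0$, with equality precisely when all $s_i w'_i$ are equal (i.e., at $E_0$); in the complementary regime, the infimum $A_{E'} \geq M/s_{n-1}$ gives $A_{E'}/S_{E'} \geq d/s_{n-1} \gg \lambda_0$ via $s_n - 1 = s_{n-1}(s_{n-1}-1)$. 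A parallel toric analysis at other quasi-smooth points gives strictly larger ratios.

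\emph{Main obstacle.} The principal challenge is the extremality argument in the third case: showing that the specific $(w_0, \ldots, w_{n-1})$-weighted blow-up uniquely minimizes $A_X/S$ among all divisorial valuations at $P$, and that non-toric valuations do not improve on it. The toric minimization rests on the Sylvester identity, which forces the optimum to occur at weights proportional to $1/s_i$; extending from toric to arbitrary divisorial valuations requires a further convexity argument. The other two cases are routine: the prime-divisor case from cohomology, and the singular-point case from Lemma \ref{singularity}, whose bound is orders of magnitude looser than $\lambda_0$.
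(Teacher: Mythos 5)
Your overall framework is correct: $\glct(X)=\inf_E A_X(E)/\sup_D\operatorname{ord}_E(D)$, and the upper bound via the toric valuation $E_0$ at $P=[0,\dots,0,1,0]$ over the Fermat-type singularity of $E=X\cap\{x_{n+1}=0\}$ recovers the paper's optimal pair $D_0=(1/a_{n+1})E$. But the lower bound has serious gaps, and the case decomposition by center is where the trouble lies.

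The decisive error is in your singular-point case. You claim $\operatorname{mult}_c(D)\le a_{n+1}\deg(D)$ ``from Lemma \ref{someweights} and the componentwise inequality $c\le a$.'' Weighted multiplicity is not monotone under componentwise comparison of weights: it depends on the weights both through the initial-form filtration and through the normalization by the ambient degree $\prod w_i$. In particular, rescaling $w\mapsto\lambda w$ rescales $\operatorname{mult}_w$ of a codimension-two cycle by $\lambda^{-(n-1)}$. Your weights $c_i=(s_{n+1}-1)/s_i$ are the weights $a_i$ (for $i<n$) rescaled by $s_n/x$; the paper's careful chain (Lemma \ref{someweights}, then raising $a_n$ to $d/s_n$, then rescaling) produces the extra factor $(x/s_n)^{n-1}$ that you have dropped. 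The true output of Lemma \ref{singularity} is $e_n=2(s_n-1)a_n/(s_n+1)^2\sim s_n^2/2$, only about twice $\sigma_n\sim s_n^2/4$ — the singular point is nearly critical, not ``orders of magnitude looser,'' and the estimate has to be done with care. Your prime-divisor case also has a gap: you assume a prime divisor has class $O_X(c)$ for an integer $c$ and invoke vanishing of $H^0(X,O_X(k))$, but $X$ is not quasi-smooth and Grothendieck--Lefschetz does not apply; the paper notes the surface example has Picard number $2$, so the class group is not cyclic. Finally, your third case is, as you concede, incomplete: you have only a toric minimization at $P$, not the passage to arbitrary divisorial valuations centered at $P$ or at other smooth points. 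The paper sidesteps all three difficulties with one uniform estimate: after reducing (by convexity) to $D$ not containing $E$, Theorem \ref{jkbound} bounds the unweighted multiplicity of $D$ at \emph{every} stack point by $a_0\cdots a_{n-2}a_{n+1}\deg(D)=s_{n-1}/a_n$, forcing $\sigma_nD$ to have multiplicity $<1$ at every smooth point; only the one singular point then needs the weighted analysis via Lemma \ref{singularity}. That is simpler and avoids both the Picard-rank issue and the need to classify valuations by center.
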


\begin{proof}
To recall the example: let $a_{n+1}=(s_n^2-s_n+2)/4$,
$a_n=(s_n-2)a_{n+1}-(s_n-1)$, $x=-1+a_n+a_{n+1}$,
$d=(s_n-1)x$, and $a_i=d/s_i$ for $0\leq i\leq n-1$.
Then $X$ is a hypersurface of degree $d$ in
$Y:=\P^{n+1}(a_0,\ldots,a_{n+1})$. Since $-d+\sum_j a_j=1$,
we have $-K_X=O_X(1)$. Let $\sigma_n=(s_n-2)a_{n+1}/(s_n-1)$.
We have to show that for every effective $\Q$-divisor $D\sim_{\Q}-K_X$,
the pair $(X,\sigma_nD)$ is lc,
and that this bound is optimal.
As a cycle on the stack $Y$,
$D$ has degree $d/(a_0\cdots a_{n+1})$.

To see that the bound $\sigma_n$ is optimal, let $E$ be the hyperplane
section $X\cap\{ x_{n+1}=0\}$; then we can take $D=(1/a_{n+1})E$.
I claim that $(X,\sigma_nD)$ is not klt, or equivalently
that $(X,((s_n-2)/(s_n-1))E)$ is not klt. 
Here $E$ is given
by the equations $\{ 0=x_{n+1},\; 0=x_0^2+\cdots+x_{n-1}^{s_{n-1}}
\}$. We read off
that $E$ is irreducible.
The stack $E$ is singular at the point $[x_0,\ldots,x_n,x_{n+1}]
=[0,\ldots,0,1,0]$, where $X$ is smooth. In coordinates $x_{n}=1$,
the singularity of $E$
is \'etale-locally isomorphic to the Fermat-type hypersurface singularity
$0=x_0^2+\cdots+x_{n-1}^{s_{n-1}}$ in $A^n$,
which is known to have log canonical threshold equal to
$\min(1,1/2+\cdots+1/s_{n-1})=(s_n-2)/(s_n-1)$
\cite[Example 8.15]{Kollarpairs}. That is, $(X,((s_n-2)/(s_n-1))E)$
is lc but not klt at the point $[0,\ldots,0,1,0]$, as we want.

It remains to show that the pair $(X,\sigma_nD)$ is lc
for every effective $\Q$-divisor $D\sim_{\Q}-K_X$.
The discrepancy of a $\Q$-Cartier
$\Q$-divisor $D$ on $X$ at a given irreducible
divisor over $X$ is an affine-linear function of $D$
\cite[Lemma 2.5]{Kollarsings}. By considering
a log resolution of $(X,D_1+D_2)$, it follows
that if the pairs $(X,D_1)$ and $(X,D_2)$ are lc, then so is
$(X,(1-t)D_1+tD_2)$ for any $t\in [0,1]$.
Since we have already handled the case where $D$ is
$1/a_{n+1}$ times the irreducible divisor $E$,
it suffices to show
that $(X,\sigma_nD)$ is lc when $D\sim_{\Q}-K_X$ and the support
of $D$ does not contain $E$.

In this case, no irreducible component of $D$
is contained in the hyperplane $x_{n+1}=0$.
So Theorem \ref{jkbound} gives that $D$ has multiplicity
at every point
at most $a_0\cdots a_{n-2}a_{n+1}\deg(D)=d/(a_{n-1}a_n)
=s_{n-1}/a_n$. Therefore, $\sigma_nD$ has multiplicity
at every point at most $s_{n-1}(s_n-2)a_{n+1}/((s_n-1)a_n)\sim 1/s_{n-1}$.
This is less than 1. So $(X,\sigma_nD)$ is lc at all smooth
points of the stack $X$ \cite[Claim 2.10.4]{Kollarsings},
hence at all points other than
$[x_0,\ldots,x_n,x_{n+1}]=[0,\ldots,0,1]$. In order
to handle that point, we will switch to analyzing
a certain weighted multiplicity of $D$.

In coordinates $x_{n+1}=1$,
$X$ becomes the hypersurface 
$0=x_0^2+x_1^3+\cdots
+x_{n-1}^{s_{n-1}}+x_n^{s_n}+x_1\cdots x_n$
in $A^{n+1}$. We want to show
that $(X,\sigma_nD)$ is lc near the origin.
Consider the weights $a_0,\ldots,a_n$ on $A^{n+1}$.
Then the $a$-weighted multiplicity of $D$ at the origin in $A^{n+1}$
satisfies
\begin{align*}
\mult_a(D)&\leq a_{n+1}\deg(D)\\
&=d/(a_0\cdots a_n),
\end{align*}
by Lemma \ref{someweights}.

Consider the modified weights $w_i=a_i=d/s_i$
for $0\leq i\leq n-1$, and $w_n=d/s_n$. Here $w_n$ is not an integer,
but we can still define multiplicity for positive rational weights
by scaling.
We have $w_n=(s_n-1)x/s_n=x-(x/s_n)$. Also,
$x=-1+a_n+a_{n+1}=(s_n-1)a_{n+1}-s_n<s_n(a_{n+1}-1)$.
So $x/s_n<a_{n+1}-1$, and hence $w_n=x-(x/s_n)>x-(a_{n+1}-1)=a_n$.
It follows that $\mult_w(D)\leq \mult_a(D)\leq d/(a_0\cdots a_n)$.

Next, let $c_i=s_nw_i/x$ for all $0\leq i\leq n$;
then $c_i=(s_{n+1}-1)/s_i$ for all $0\leq i\leq n$.
Since $D$ has dimension $n-1$, we have
$\mult_c(D)=(x/s_n)^{n-1}\mult_w(D)\leq x^{n-1}d/(s_n^{n-1}a_0
\cdots a_n)=1/(s_n^{n-1}(s_n-1)^{n-2}a_n)$.
By Lemma \ref{singularity}, it follows
that $(X,e_nD)$ is lc near the point $[0,\ldots,0,1]$, where we let
\begin{align*}
e_n&= \frac{2}{s_n^{n-1}(s_n+1)^2(s_n-1)^{n-3}}\cdot
s_n^{n-1}(s_n-1)^{n-2}a_n\\
&=\frac{2(s_n-1)a_n}{(s_n+1)^2}.
\end{align*}

It remains to show that this number $e_n$ is at least $\sigma_n$,
for every even number $n\geq 2$. The ratio $e_n/\sigma_n$
is $2(s_n-1)^2a_n/((s_n+1)^2(s_n-2)a_{n+1})$. Since
$a_{n+1}\sim s_n^2/4$ and $a_n\sim s_n^3/4$, this ratio
is asymptotic to 2, so it is greater than 1 for $n$ large.
With a bit more calculation, we find that $e_n/\sigma_n>1$ for every
even number $n\geq 2$. (For example, $e_2/\sigma_2=441/440$.)
That completes the proof
that $\glct(X)=\sigma_n$. Since this is greater than 1,
the klt Fano variety $X$ is exceptional.
\end{proof}

\section{Exceptionality of the klt Fano example in odd dimensions}
\label{oddexsect}

We now prove the exceptionality of our klt Fano example
in odd dimensions.

\begin{theorem}
\label{oddex}
For each odd number $n$ at least 3, the klt Fano $n$-fold $X$
in Theorem \ref{fano} is exceptional.
More strongly, the global log canonical threshold of $X$
is equal to $(s_n-2)a_{n+1}/(s_n-1)\sim s_n^2/4$.
In particular, this is greater than 1.
\end{theorem}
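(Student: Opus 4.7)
The proof runs closely parallel to that of Theorem \ref{evenex}, with the essential new ingredient being the treatment of the worst point of $X$, where now the mixed monomial in the defining equation is $x_1\cdots x_{n-1}x_n^2$ rather than $x_1\cdots x_n$, so that Lemma \ref{singularity} no longer applies directly.

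First, to see that $\sigma_n:=(s_n-2)a_{n+1}/(s_n-1)$ is an upper bound for $\glct(X)$, take $E = X\cap\{x_{n+1}=0\}$ and $D=(1/a_{n+1})E$. Since $x_{n+1}$ appears in every non-Fermat monomial of the equation of $X$, $E$ is the irreducible variety cut out by $\{x_{n+1}=0,\;0=x_0^2+\cdots+x_{n-1}^{s_{n-1}}\}$. The stack $X$ is smooth at $[0,\ldots,0,1,0]$ thanks to the monomial $x_n^{s_n}x_{n+1}$, and in a suitable \'etale neighborhood there, $E$ is isomorphic to the Fermat singularity $x_0^2+\cdots+x_{n-1}^{s_{n-1}}=0$ in $A^n$, with log canonical threshold $(s_n-2)/(s_n-1)$. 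Hence $(X,\sigma_n D)$ is lc but not klt. Convexity of the lc property in the boundary then reduces the other direction to the case where $D\sim_{\Q}-K_X$ is effective and does not contain $E$; then no component of $D$ lies in $\{x_{n+1}=0\}$, and Theorem \ref{jkbound} bounds the unweighted multiplicity of $D$ at every point of the stack $X$ by $a_0\cdots a_{n-2}a_{n+1}\deg(D)=s_{n-1}/a_n$. Thus $\mult(\sigma_n D)\sim 1/s_{n-1}<1$, giving $(X,\sigma_n D)$ lc at every smooth stack point and leaving only $[0,\ldots,0,1]$ to treat. In the chart $x_{n+1}=1$, $X$ becomes the affine hypersurface $\widetilde X:=\{0=x_0^2+x_1^3+\cdots+x_{n-1}^{s_{n-1}}+x_n^{s_n}+x_1\cdots x_{n-1}x_n^2\}\subset A^{n+1}$.

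The plan at this worst point is to apply Lemma \ref{lc} with weights $b_i=(s_n-1)/s_i$ for $0\leq i\leq n-1$ and $b_n=(s_n+1)/4$, all of which are positive integers (using $s_n\equiv 3\pmod 8$ for odd $n\geq 3$). These $b$-weights are arranged so that the Fermat monomials $x_i^{s_i}$ for $i\leq n-1$ and the mixed monomial $x_1\cdots x_{n-1}x_n^2$ all have $b$-degree exactly $e:=s_n-1$, while $x_n^{s_n}$ has strictly greater $b$-degree $s_n(s_n+1)/4$. Consequently, the $b$-weighted tangent cone of $\widetilde X$ at the origin is the hypersurface
$$X_1:=\{0=x_0^2+x_1^3+\cdots+x_{n-1}^{s_{n-1}}+x_1\cdots x_{n-1}x_n^2\}\subset Y':=\P^n(b_0,\ldots,b_n).$$
A direct Jacobian check shows $X_1$ is smooth outside the single point $[0,\ldots,0,1]$, and in the chart $x_n=1$ its singularity at that point is precisely the $(n-1)$-dimensional singularity $X_{n-1}$ of Lemma \ref{singularity}. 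Moreover, the weights $b_0,\ldots,b_{n-1}$ restricted to $A^n$ match the weights $c_i=(s_n-1)/s_i$ used in Lemma \ref{singularity} for $X_{n-1}$.

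Let $D_1\subset X_1$ be the $b$-weighted tangent cone of $D$. Starting from $\mult_a(D)\leq a_{n+1}\deg(D)=d/(a_0\cdots a_n)$ (Lemma \ref{someweights} in $A^{n+2}$ with weights $a_0,\ldots,a_n$), and using that $b_n$ suitably rescaled exceeds $a_n$ as in the even case, one obtains a bound on $\deg(D_1)=\mult_b(D)$; a further application of Lemma \ref{someweights} inside $Y'$ with the coordinate $x_n$ bounds the weighted multiplicity of $D_1$ at $[0,\ldots,0,1]\in X_1$. Plugging this into Lemma \ref{singularity} for $X_{n-1}$ yields a constant $e_n$ with $(X_1,e_n D_1)$ lc near $[0,\ldots,0,1]$; away from that point, $X_1$ is smooth and Theorem \ref{jkbound} shows $(X_1,\sigma_n D_1)$ is lc there. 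A degree check using $-K_{X_1}=O_{X_1}((s_n-3)/4)$ and $\deg O_{X_1}(1)=e/(b_0\cdots b_n)=4/((s_n-1)^{n-2}(s_n+1))$ then verifies that $K_{X_1}+\sigma_n D_1\sim_{\Q} O_{X_1}(r)$ with $r\leq 0$, so Lemma \ref{lc} gives $(\widetilde X,\sigma_n D)$ lc at the origin. The main obstacle is the bookkeeping showing both $e_n\geq\sigma_n$ and the degree bound $r\leq 0$: the asymptotics (ratios tending to limits bounded away from $1$ in the favorable direction) make both inequalities hold for large $n$ with a comfortable margin, but the small odd dimensions $n=3,5$ must be checked by hand to rule out exceptional failures, just as in the even case.
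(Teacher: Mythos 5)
Your proposal follows the paper's argument essentially step by step: the same test divisor $E=X\cap\{x_{n+1}=0\}$ with its Fermat singularity at $[0,\ldots,0,1,0]$ to show $\glct(X)\leq\sigma_n$, the same convexity reduction, the same use of Theorem \ref{jkbound} away from $[0,\ldots,0,1]$, and, at the worst point, the same choice of weights $b_i=(s_n-1)/s_i$ for $i\leq n-1$ and $b_n=(s_n+1)/4$ (the paper writes them as $c_i$ after rescaling) to produce a weighted tangent cone whose remaining singularity is the $(n-1)$-dimensional hypersurface treated by Lemma \ref{singularity}. The degree check $r\leq 0$ via $-K_{X_1}=O_{X_1}((s_n-3)/4)$, the application of Lemma \ref{someweights} to bound the weighted multiplicity of the tangent cone of $D$, and the final asymptotic-plus-small-cases verification are all exactly the paper's steps; no genuine gap.
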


\begin{proof}
To recall the example: let $a_{n+1}=(s_n^2-3s_n+4)/4$,
$a_n=(s_n-2)a_{n+1}-(s_n-1)$, $x=-1+a_n+a_{n+1}$,
$d=(s_n-1)x$, and $a_i=d/s_i$ for $0\leq i\leq n-1$.
Then $X$ is a hypersurface of degree $d$ in
$Y:=\P^{n+1}(a_0,\ldots,a_{n+1})$. Since $-d+\sum_j a_j=1$,
we have $-K_X=O_X(1)$. Let $\sigma_n=(s_n-2)a_{n+1}/(s_n-1)$.
We have to show that for every effective $\Q$-divisor $D\sim_{\Q}-K_X$,
the pair $(X,\sigma_nD)$ is lc,
and that this bound is optimal. As a cycle on the stack $Y$,
$D$ has degree $d/(a_0\cdots a_{n+1})$.

To see that the bound $\sigma_n$ is optimal, let $E$ be the hyperplane
section $X\cap\{ x_{n+1}=0\}$; then we can take $D=(1/a_{n+1})E$.
I claim that $(X,\sigma_nD)$ is not klt, or equivalently
that $(X,((s_n-2)/(s_n-1))E)$ is not klt. Here $E$ is given
by the equations $\{0=x_{n+1},\; 0=x_0^2+\cdots+x_{n-1}^{s_{n-1}}\}$,
which in particular shows that $E$ is irreducible.
The stack $E$ is singular at the point $[x_0,\ldots,x_n,x_{n+1}]
=[0,\ldots,0,1,0]$, where $X$ is smooth. In coordinates $x_{n+1}=1$,
the singularity of $E$
is \'etale-locally isomorphic to the Fermat-type hypersurface singularity
$0=x_0^2+\cdots+x_{n-1}^{s_{n-1}}$ in $A^n$,
which is known to have log canonical threshold equal to
$\min(1,1/2+\cdots+1/s_{n-1})=(s_n-2)/(s_n-1)$
\cite[Example 8.15]{Kollarpairs}. That is, $(X,((s_n-2)/(s_n-1))E)$
is lc but not klt at the point $[0,\ldots,0,1,0]$, as we want.

It remains to show that the pair $(X,\sigma_nD)$ is lc
for every effective $\Q$-divisor $D\sim_{\Q}-K_X$.
The discrepancy of a $\Q$-Cartier
$\Q$-divisor $D$ on $X$ at a given irreducible
divisor over $X$ is an affine-linear function of $D$
\cite[Lemma 2.5]{Kollarsings}. By considering
a log resolution of $(X,D_1+D_2)$, it follows
that if the pairs $(X,D_1)$ and $(X,D_2)$ are lc, then so is
$(X,(1-t)D_1+tD_2)$ for any $t\in [0,1]$.
Since we have already handled the case where $D$ is
$1/a_{n+1}$ times the irreducible divisor $E$,
it suffices to show
that $(X,\sigma_nD)$ is lc when $D\sim_{\Q}-K_X$ and the support
of $D$ does not contain $E$.

In this case, no irreducible component of $D$
is contained in the hyperplane $x_{n+1}=0$.
So Theorem \ref{jkbound} gives that $D$ has multiplicity
at every point
at most $a_0\cdots a_{n-2}a_{n+1}\deg(D)=d/(a_{n-1}a_n)
=s_{n-1}/a_n$. Therefore, $\sigma_nD$ has multiplicity
at every point at most $s_{n-1}(s_n-2)a_{n+1}/((s_n-1)a_n)\sim 1/s_{n-1}$.
This is less than 1. So $(X,\sigma_nD)$ is lc at all smooth
points of the stack $X$ \cite[Claim 2.10.4]{Kollarsings},
hence at all points other than
$[x_0,\ldots,x_n,x_{n+1}]=[0,\ldots,0,1]$. In order
to handle that point, we will switch to analyzing
a certain weighted multiplicity of $D$.

In coordinates $x_{n+1}=1$,
$X$ becomes the hypersurface 
$0=x_0^2+x_1^3+\cdots
+x_{n-1}^{s_{n-1}}+x_n^{s_n}+x_1\cdots x_{n-1}x_n^2$
in $A^{n+1}$. We want to show
that $(X,\sigma_nD)$ is lc near the origin.
Consider the weights $a_0,\ldots,a_n$ on $A^{n+1}$.
Then the $a$-weighted multiplicity of $D$ at the origin in $A^{n+1}$
satisfies
\begin{align*}
\mult_a(D)&\leq a_{n+1}\deg(D)\\
&=d/(a_0\cdots a_n),
\end{align*}
by Lemma \ref{someweights}.

Let $w_i=a_i$
for $0\leq i\leq n-1$, and let $w_n=(s_n+1)x/4$.
Since $w_n>a_n$,
we have $\mult_w(D)\leq \mult_a(D)\leq d/(a_0\cdots a_n)$.
To simplify the numbering, let $c_i=w_i/x$ for all $0\leq i\leq n$;
then $\mult_c(D)=x^{n-1}\mult_w(D)
\leq x^{n-1}d/(a_0\cdots a_n)$.
Going back through the definitions, this means
that $c_i=(s_n-1)/s_i$ for $0\leq i\leq n-1$
and $c_n=(s_n+1)/4$. Also, let $e=s_n-1$,
so that $d=(s_n-1)x=ex$.
Then we can rewrite our bound as
$\mult_c(D)\leq e/(c_0\cdots c_{n-1}a_n)$.
(Note that the largest numbers from $c_0,\ldots,c_n$
are $c_0,c_1,c_n$, contrary to our usual convention.)

The reason for choosing the weights $c_0,\ldots,c_n$
is that with these weights, the weighted tangent cone at the origin
to the hypersurface $X\subset A^{n+1}$ is klt:
it is the hypersurface $S\subset \P^n(c_0,\ldots,c_n)$ of degree $e$
defined by
$$0=x_0^2+x_1^3+\cdots
+x_{n-1}^{s_{n-1}}+x_1\cdots x_{n-1}x_n^2.$$
The stack $S$ is smooth outside the point
$[x_0,\ldots,x_n]=[0,\ldots,0,1]$. The singularity
of $S$ at that point, in coordinates $x_n=1$, is
$0=x_0^2+x_1^3+\cdots+x_{n-1}^{s_{n-1}}+x_1\cdots x_{n-1}$.
This is exactly the singularity of dimension $n-1$
in Lemma \ref{singularity}.

Let $F$ be the weighted tangent cone of $D$ at the origin
in $A^{n+1}$,
so that $F$ is an effective $\Q$-divisor
in $S\subset \P^n(c_0,\ldots,c_n)$. Since $D$
is $\Q$-Cartier, $F$ is $\Q$-linearly equivalent
to a rational multiple of $O_{S}(1)$.
By inversion of adjunction as in the proof
of Lemma \ref{lc}, the pair $(X,\sigma_nD)$ is lc
if $(S,\sigma_nF)$ is lc
and $K_{S}+\sigma_nF\sim_{\Q} O_{S}(r)$ with $r\leq 0$.

We first check that $r\leq 0$.
By the adjunction formula, we have $-K_{S}=O_{S}(-e
+\sum_j c_j)=O_X((s_n-3)/4)$.
To show that $r\leq 0$, it is equivalent to show that
$\sigma_n\deg(F)=\sigma_nF\cdot c_1(O(1))^{n-2}
\leq (-K_{S})\cdot c_1(O(1))^{n-2}$.
Here $(-K_{S})\cdot c_1(O(1))^{n-2}=(s_n-3)e/(4c_0\cdots c_n)$.
As a result, the inequality above holds if
$\sigma_n\leq (s_n-3)e/(4c_0\cdots c_n\deg(F))$.
We have $\deg(F)=\mult_c(D)\leq e/(c_0\cdots c_{n-1}a_n)$.
So the inequality above holds if
$\sigma_n\leq g_n:=(s_n-3)a_n/(4c_n)$. Here $a_n\sim s_n^3/4$
and $c_n\sim s_n/4$, and so $g_n\sim s_n^3$,
whereas $\sigma_n\sim s_n^2/4$.
With a bit more calculation, we see that
$g_n$ is greater than $\sigma_n$ for each odd $n\geq 3$.
(For example, $g_3\doteq 16026.4$ and $\sigma_3\doteq 420.7$.)

It remains to show that $(S,\sigma_nF)$ is lc.
Since the stack $S$ is smooth outside the point
$[x_0,\ldots,x_n]=[0,\ldots,0,1]$, it is easy to show
that $(S,\sigma_nF)$ is lc outside that point.
Namely, $F$ has degree at most $e/(c_0\cdots c_{n-1}a_n)$.
By Johnson-Koll\'ar's bound (Theorem \ref{jkbound}),
in every orbifold chart $\{ x_i\neq 0\}$ and at every point,
$F$ has multiplicity at most
$c_0\cdots c_{n-3}c_n\deg(F)$ (if $n\geq 5$)
or $c_0c_1\deg(F)$ (if $n=3$).
So $F$ has multiplicity at every point at most
$ec_n/(c_{n-2}c_{n-1}a_n)$ if $n\geq 5$,
or $e/(c_2a_n)$ if $n=3$. 
So $\sigma_n F$ has multiplicity at every point at most
$\sigma_nec_n/(c_{n-2}c_{n-1}a_n)$ if $n\geq 5$,
or $\sigma_3e/(c_2a_n)\doteq 0.17$ if $n=3$. In particular,
this is less than 1 for $n=3$.

For $n\geq 5$, we have
$e\sim s_n\sim s_{n-2}^4$,
$c_{n-2}\sim s_n/s_{n-2}\sim s_{n-2}^3$,
$c_{n-1}\sim s_n/s_{n-1}\sim s_{n-2}^2$,
$c_n\sim s_n/4\sim s_{n-2}^4/4$,
$a_n\sim s_n^3/4\sim s_{n-2}^{12}/4$, and $\sigma_n\sim s_n^2/4
\sim s_{n-2}^8/4$.
So $e\sigma_nc_n/(c_{n-2}c_{n-1}a_n)\sim 1/(4s_{n-2})$,
which is less than 1 for $n$ large. With a bit more calculation,
it is less than 1 for every odd number $n\geq 5$. (For example,
for $n=5$, it is about $0.024$.)
So, for each odd number $n\geq 3$,
$\sigma_nF$ has multiplicity less than 1 everywhere.
Since the stack $S$ is smooth outside the point
$[x_0,\ldots,x_n]=[0,\ldots,0,1]$, it follows
that $(S,\sigma_nF)$ is lc outside that point.

In coordinates $x_n=1$, $F$ corresponds to a codimension-2 cycle
on $A^n$. Using weights $c_0,\ldots,c_{n-1}$ on $A^n$,
Lemma \ref{someweights} gives that the weighted 
multiplicity of $F$ at the origin in $A^{n}$
satisfies
\begin{align*}
\mult_c(F)&\leq c_{n}\deg(F)\\
&\leq ec_n/(c_0\cdots c_{n-1}a_n)\\
&=\frac{s_n+1}{4(s_n-1)^{n-2}a_n}.
\end{align*}

The weights $c_0,\ldots,c_{n-1}$ are those considered
in Lemma \ref{singularity} to analyze the hypersurface
$X_{n-1}\subset A^n$. That lemma gives that
$(S,\sigma_nF)$ is lc near the point $[x_0,\ldots,x_n]
=[0\ldots,0,1]$ if
$$\sigma_n\leq 
\frac{2}{s_{n-1}^{n-2}(s_{n-1}+1)^2(s_{n-1}-1)^{n-4}\mult_c(F)},$$
hence if 
$$\sigma_n\leq \frac{8(s_n-1)^{n-2}a_n}{s_{n-1}^{n-2}(s_{n-1}+1)^2(s_{n-1}-1)^{n-4}(s_n+1)}.$$

I claim that this fraction $f_n$ is greater than $\sigma_n$
for every odd number $n\geq 3$. We have $a_n\sim s_n^3/4$,
and so $f_n\sim s_n^2$,
whereas $\sigma_n\sim s_n^2/4$.
In particular, $f_n>\sigma_n$ for $n$ sufficiently large.
With a bit more calculation, we find that $f_n>\sigma_n$
for all odd $n\geq 3$. (For example, $f_3\doteq 1803.0$
and $\sigma_3\doteq 420.7$.)
That completes the proof
that $\glct(X)=\sigma_n$. Since this is greater than 1,
the klt Fano variety $X$ is exceptional.
\end{proof}

\section{Klt Fano varieties with large bottom weight}

The {\it bottom weight }of a Fano variety $X$
means the smallest positive integer $m$
such that $H^0(X,-mK_X)\neq 0$. The following klt Fano variety
has the largest known bottom weight in even dimensions $n$ at least 4,
asymptotic to $\frac{5}{9}s_n^2$ (hence, crudely, about $2^{2^n}$).
We know that there is some
upper bound for the bottom weight of klt Fano varieties in each dimension,
by Birkar's theorem on boundedness of complements
\cite[Theorem 1.1]{Birkarcomp}.

In particular, Theorem \ref{fanoglct} beats the examples
by Wang and me of klt Fano varieties with large bottom weight
\cite[Theorem 5.1]{TW}. The example here is not quasi-smooth. 

\begin{theorem}
\label{fanoglct}
For each even integer $n$ at least 4, let 
$a_{n+1}=\frac{1}{36}(20s_n^2-295s_n+113)$ and
$a_n=\frac{1}{36}(20s_n^2-55s_n+17)$.
Let $x=-1+a_n+a_{n+1}$,
$d=(s_n-1)x$, and $a_i=d/s_i$ for $0\leq i\leq n-1$.
Then a general hypersurface $X$ of degree $d$ in
$\P^{n+1}(a_0,\ldots,a_{n+1})$ is well-formed and a klt Fano variety,
with $-K_X=O_X(1)$. Its bottom weight is $a_{n+1}$, which is asymptotic
to $\frac{5}{9}s_n^2$.
\end{theorem}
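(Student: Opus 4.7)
The plan is to follow the blueprint established in Theorems \ref{ample} and \ref{fano}. I would first verify the arithmetic identities that force $-K_X = O_X(1)$, then check well-formedness of both $Y$ and $X$, then verify the klt property via the Newton polyhedron criterion (Theorem \ref{newton}) at the unique non-quasi-smooth point, and finally compute the bottom weight and its asymptotic.

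For the canonical class, the identity $\sum_{i=0}^{n-1} 1/s_i = 1 - 1/(s_n-1)$ gives $d - \sum_{i=0}^{n-1} a_i = d/(s_n-1) = x$, hence $d - \sum_{j=0}^{n+1} a_j = x - a_n - a_{n+1} = -1$; once $X$ is well-formed the adjunction formula yields $-K_X = O_X(1)$. For well-formedness of $Y = \P^{n+1}(a_0, \ldots, a_{n+1})$, it suffices to show $\gcd(a_{n+1}, x) = \gcd(a_n, x) = \gcd(a_n, a_{n+1}, s_n-1) = 1$, arguing as in the proof of Theorem \ref{fano} by ruling out each small prime divisor using the explicit polynomial formulas for $a_n$ and $a_{n+1}$ in $s_n$. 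Well-formedness of $X$ follows once the equation of $X$ is chosen to include all the Fermat-type monomials $x_i^{s_i}$ for $0 \leq i \leq n-1$, so that $X$ contains no positive-dimensional coordinate linear subspace of $Y$.

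I would then exhibit an equation for $X$ in the same spirit as before: the $n$ Fermat-type monomials $x_0^2, x_1^3, \ldots, x_{n-1}^{s_{n-1}}$, together with two further monomials in $x_n$ and $x_{n+1}$ (possibly multiplied by some of the $x_i$ with $i < n$) whose exponent vectors are chosen so that $X$ is quasi-smooth away from the single point $[0, \ldots, 0, 1]$ and so that the affine hypersurface in $A^{n+1}$ obtained by setting $x_{n+1} = 1$ has canonical singularities at the origin. By Theorem \ref{newton}, the latter reduces to showing that the Newton polyhedron of the listed monomials contains $(1, 1, \ldots, 1)$ in its interior, which can be verified by exhibiting a convex combination of three of the exponent vectors with every coordinate less than $1$, as in the two earlier proofs. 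For the bottom weight, one first checks that $a_{n+1}$ is the smallest weight: $a_n - a_{n+1} = (240 s_n - 96)/36 > 0$, and for $i < n$ the weight $a_i = d/s_i$ is much larger, since $a_{n-1} \sim \tfrac{10}{9} s_n^3/s_{n-1}$ while $a_{n+1} \sim \tfrac{5}{9} s_n^2$. Since $a_{n+1} < d$, the exact sequence $0 \to O_Y(m-d) \to O_Y(m) \to O_X(m) \to 0$ gives $H^0(X, O_X(m)) = H^0(Y, O_Y(m))$ for $m \leq a_{n+1}$; and $H^0(Y, O_Y(m))$ is spanned by weighted-homogeneous monomials of degree $m$, the smallest positive such degree being $a_{n+1}$ itself (realized by $x_{n+1}$). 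The asymptotic $a_{n+1} \sim \tfrac{5}{9} s_n^2$ is read off from the defining formula.

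The main obstacle is the klt verification at $[0, \ldots, 0, 1]$. In contrast to Theorems \ref{ample} and \ref{fano}, here $a_n$ and $a_{n+1}$ are of comparable size (both asymptotic to $\tfrac{5}{9} s_n^2$), rather than differing by a factor of roughly $s_n$, which changes which monomials have degree $d$ and forces a different choice of the two extra monomials in the defining equation. Identifying these two monomials and confirming the Newton polyhedron interior-point condition for the resulting affine singularity is the essential new content of the proof; once this is in place, quasi-smoothness away from $[0, \ldots, 0, 1]$ and the finite-quotient invariance of the klt property \cite[Corollary 2.43]{Kollarsings} complete the verification that $X$ is klt.
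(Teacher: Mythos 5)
Your overall blueprint matches the paper's, but there is a genuine gap at the klt verification, and it stems from a false structural assumption. You propose to choose "two further monomials in $x_n$ and $x_{n+1}$" so that $X$ is "quasi-smooth away from the single point $[0,\ldots,0,1]$." With these weights that is impossible: there is no monomial of degree $d$ supported only on $\{x_n, x_{n+1}\}$, so every defining equation vanishes identically on the coordinate projective line $Z = \{x_0 = \cdots = x_{n-1} = 0\}$, i.e.\ $X$ contains $Z$ and is non-quasi-smooth along this entire curve, not at one point. (The paper's equation has the two extra monomials $x_1 x_n^b x_{n+1}$ and $x_2\cdots x_{n-1} x_n^{12} x_{n+1}^c$, both involving earlier variables.) This changes the proof in several places your plan doesn't account for: well-formedness of $X$ now relies on $n \geq 4$ to make $Z$ codimension $\geq 2$ (rather than $n \geq 2$); normality of $X$ needs Serre's criterion applied to $X$ quasi-smooth only away from the curve $Z$; and, most importantly, the klt check has to be performed separately in both affine charts $\{x_n \neq 0\}$ and $\{x_{n+1} \neq 0\}$, with two different Newton-polyhedron computations (in the paper, a two-point convex combination suffices in the $x_n$-chart, while the $x_{n+1}$-chart needs a four-point combination with an $\epsilon$ parameter). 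Your plan as stated would check only the point $[0,\ldots,0,1]$ and silently miss the singularities along the rest of $Z$, including $[0,\ldots,0,1,0]$.

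Two smaller omissions: since $a_n$ and $a_{n+1}$ are defined as $\frac{1}{36}$ times quadratic polynomials in $s_n$, you need to prove they are actually integers (the paper argues mod $8$ and mod $9$ using $s_n \equiv -1 \pmod 8$ and $s_n \equiv -2 \pmod 9$); and in the well-formedness argument the relevant moduli are now $2,3,5$ and larger primes, not just parity as in Theorem \ref{fano}. Your argument for the bottom weight (smallest weight is $a_{n+1}$, use the restriction sequence with $a_{n+1} < d$) is correct and matches the intended reasoning, and your identification that the comparable sizes of $a_n$ and $a_{n+1}$ force a different monomial structure is a good observation — it just doesn't go far enough to notice that the price paid is a whole curve of non-quasi-smooth points.
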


Moraga conjectured that every Fano type variety of dimension $n$
has an $N$-complement for some $N\leq (2s_n-3)(s_n-1)$
\cite[Conjecture 4.1]{Moraga}. Theorem \ref{fanoglct}
implies that this bound would be optimal up to a constant
factor.

It would be interesting
to compute the global log canonical threshold for these examples.
The global log canonical threshold
of a Fano variety is at most the bottom weight, and it seems to be close
to the bottom weight when the bottom weight is large.

\begin{question}
\label{glctfano}
For each even number $n$ at least 4,
does the variety in Theorem \ref{fanoglct}
have the largest bottom weight and the largest glct
among all klt Fano $n$-folds?
\end{question}

I speculate that the optimal examples in odd dimensions
will also have bottom weight and glct asymptotic to $\frac{5}{9}s_n^2$.

The best examples I know in low dimensions are as follows.
I conjecture that the klt del Pezzo surface with largest bottom weight
and largest glct
is $X_{154}\subset \P^3(77,45,19,14)$, for which $\glct(X)=21/2=10.5$;
this is a non-quasi-smooth hypersurface, apparently new.
(The first known klt del Pezzo surface with bottom weight 14
was Kim-Park's quasi-smooth complete intersection
$X_{64,70}\subset \P^4(45,32,25,19,14)$,
which has glct equal to $28/3\doteq 9.33$ \cite[Table 2]{KP}.)

I conjecture that the klt Fano 3-fold with largest bottom weight
and largest glct is another non-quasi-smooth hypersurface,
introduced here:
$$X_{65418}\subset \P^4(32709,21806,9233,884,787),$$
with equation $0=x_0^2+x_1^3+x_2^7x_4+x_1x_2x_3^{38}x_4
+x_3x_4^{82}$.
The largest previously known bottom weight
of a klt Fano 3-fold
occurs for Johnson-Koll\'ar's
quasi-smooth hypersurface
$$X_{37584}\subset
\P^4(18792,12528,5311,547,407)$$
\cite[Introduction]{JK4}.

Finally, the klt Fano 4-fold in Theorem \ref{fanoglct}
has bottom weight 1799223.
The largest previously known bottom weight
of a klt Fano 4-fold is
1094225, which occurs for the quasi-smooth
hypersurface \cite[ID 1228436]{GRD}.

\begin{proof}
(Theorem \ref{fanoglct}) By the properties of the Sylvester sequence,
we have $d-\sum a_i=-1$.
Also, we compute that
the equation of $X$ includes at least the monomials
$0=x_0^2+x_1^3+\cdots
+x_{n-1}^{s_{n-1}}+x_1x_n^bx_{n+1}+x_2\cdots x_{n-1}x_n^{12}x_{n+1}^c$,
where $b=(4s_n-31)/3$ and $c=(5s_n-5)/3$.

We first check that $a_{n+1}$ and $a_n$ are integers. The denominator
36 factors as $2^23^2$.
Since $n$ is even and at least 2, we have $s_n\equiv -1\pmod{8}$
by induction from the definition of the Sylvester sequence,
as in the proof of Theorem \ref{ample}.
So $20s_n^2-295s_n+113\equiv 20(-1)^2-295(-1)+113\equiv 4\pmod{8}$.
It follows that $a_{n+1}$ is integral at 2 and odd. Let $e=a_n-a_{n+1}
=\frac{1}{3}(20s_n-8)$. We see that $e$ is integral at 2 and even,
and so $a_n$ is integral at 2 and odd. Next, we have $s_n\equiv -2\pmod{9}$
since $n\geq 2$. Write $s_n=9t-2$ for an integer $t$.
Then $a_{n+1}=\frac{1}{36}(20s_n^2-295s_n+113)=\frac{1}{4}(180t^2-375t+87)
\equiv 0\pmod{3}$. Also, $e=\frac{1}{3}(20s_n-8)=60t-16\equiv -1\pmod{3}$.
So $a_{n+1}$ and $a_n$ are integers, both are odd, and
$a_{n+1}\equiv 0\pmod{3}$ while $a_n\equiv -1\pmod{3}$. It is also
immediate from the definitions that $a_{n+1}$ and $a_n$ are nonzero
modulo 5.

Let us show that the weighted projective
space $Y=\P^{n+1}(a_0,\ldots,a_{n+1})$ is well-formed.
That is, we have to show that $\gcd(a_0\ldots,\widehat{a_j},
\ldots,a_{n+1})=1$ for each $j$. Let $x=-1+a_n+a_{n+1}$.
It suffices to show
that $\gcd(a_{n+1},x)=1$, $\gcd(a_n,x)=1$, and $\gcd(a_{n+1},a_n,s_n-1)=1$.

We first show that $\gcd(a_{n+1},x)=1$.
Let $p$ be a prime number that divides both $a_{n+1}$ and $x$.
We know that $a_{n+1}$ is odd and not a multiple of 5,
and $x\equiv -1+a_n+a_{n+1}\equiv -1-1+0\equiv -2\pmod{3}$;
so we must have $p> 5$. Since $x=-1+2a_{n+1}+e$,
we have $e\equiv 1\pmod{p}$. That is, $\frac{1}{3}(20s_n-8)\equiv 1
\pmod{p}$, and so $20s_n\equiv 11\pmod{p}$. It follows
that $s_n\equiv 11/20\pmod{p}$. So $0\equiv a_{n+1}
\equiv (1/36)(20(11/20)^2-295(11/20)+113)\equiv -6/5\pmod{p}$.
So $p$ is 2 or 3, contradiction.
So $\gcd(a_{n+1},x)=1$.

Next, we show that $\gcd(a_n,x)=1$. Let $p$ be a prime number
that divides $a_n$ and $x$. Since $a_n$ is not a multiple of 2,
3, or 5, we must have $p>5$.
Since $x=-1+a_n+a_{n+1}$, we have $a_{n+1}\equiv 1\pmod{p}$
and hence $e\equiv -1\pmod{p}$. That is, $\frac{1}{3}(20s_n-8)
\equiv -1\pmod{p}$, so $20s_n\equiv 5\pmod{p}$, and hence
$s_n\equiv 1/4\pmod{p}$. So $0\equiv a_n
\equiv (1/36)(20(1/4)^2-55(1/4)+17)
\equiv 1/8\pmod{p}$, a contradiction. So $\gcd(a_n,x)=1$.

Finally, we show that $\gcd(a_n,a_{n+1},s_n-1)=1$ (and in fact
$\gcd(a_n,s_n-1)=1$). Let $p$ be a prime
number that divides $a_n$ and $s_n-1$. In particular,
$p>5$ because $p$ divides $a_n$. We have $s_n\equiv 1\pmod{p}$,
and so $0\equiv a_{n}\equiv (1/36)(20(1)^2-55(1)+17)
\equiv -1/2$, contradiction. This completes the proof
that $Y$ is well-formed.

To show that $X$ is well-formed, it remains to show
that $X$ does not contain any $(n-1)$-dimensional
coordinate linear subspace of $Y$
along which the variety $Y$ is singular (that is, where
the corresponding smooth stack has nontrivial stabilizer group).
Since the equation of $X$ includes the monomials
$x_0^2,x_1^3,\ldots,x_{n-1}^{s_{n-1}}$, $X$ contains
at most one positive-dimensional coordinate linear subspace of $Y$,
the projective line $Z$ given by $0=x_0=\cdots=x_{n-1}$.
Since $n\geq 4$, it follows that $X$ is well-formed.
Also, the hypersurface $X$ is normal, by Serre's criterion,
using that $X$ is quasi-smooth
outside the curve $Z$. Then
adjunction applies: we have $K_X=O_X(d-\sum a_i)=O_X(-1)$,
as we want.

Next, we show that a general hypersurface $X$ of degree $d$ in $Y$
is klt. As we have said,
$X$ is quasi-smooth (and hence klt) outside the projective line $Z$.
It remains to show that $X$ is klt in the open subsets $x_n\neq 0$
and $x_{n+1}\neq 0$.

In coordinates $x_{n}=1$, the equation of $X$ includes the monomials
$0=x_0^2+x_1^3+\cdots
+x_{n-1}^{s_{n-1}}+x_1x_{n+1}+x_2\cdots x_{n-1}x_{n+1}^c$,
where $c=(5s_n-5)/3$. By Theorem \ref{newton},
it suffices to show that the convex hull of the points
$(2,0,\ldots,0)$, $(0,3,0,\ldots,0)$, \ldots,
$(0,\ldots,0,s_{n-1},0)$, $(0,1,0,\ldots,0,1)$,
$(0,0,1,\ldots,1,c)$ in $\R^{n+1}$ contains
a point with all coordinates less than 1. In fact, we only need
two of these points: namely, $(1/3)(2,0,\ldots,0)+
(2/3)(0,1,0,\ldots,0,1)$ has all coordinates less than 1.
Thus $X$ is klt in the open set $x_n\neq 0$.

It remains to analyze $X$ in coordinates $x_{n+1}=1$.
The equation of $X$ includes the monomials
$0=x_0^2+x_1^3+\cdots
+x_{n-1}^{s_{n-1}}+x_1x_n^b+x_2\cdots x_{n-1}x_n^{12}$.
So it suffices to show that the convex hull of the points
$(2,0,\ldots,0)$, $(0,3,0,\ldots,0)$, \ldots,
$(0,\ldots,0,s_{n-1},0)$, $(0,1,0,\ldots,0,b)$,
$(0,0,1,\ldots,1,12)$ in $\R^{n+1}$ contains
a point with all coordinates less than 1. Indeed, the point
$(1/2-\epsilon)(2,0,\ldots,0)
+(1/3-\epsilon)(0,3,0,\ldots,0)+(1/12+3\epsilon)(0,0,7,0,\ldots,0)
+(1/12-\epsilon)(0,0,1,\ldots,1,12)$ has all coordinates less
than 1 if $0<\epsilon <1/60$.
This completes the proof that $X$ is klt.
\end{proof}

\section{Klt varieties with ample canonical class
and large bottom weight}

The {\it bottom weight }of a projective variety $X$ with $K_X$ ample
means the smallest positive integer $m$
such that $H^0(X,mK_X)\neq 0$. The following klt variety
with ample canonical class
has the largest known bottom weight in even dimensions $n$ at least 4,
asymptotic to $\frac{5}{9}s_n^2$ (hence, crudely, about $2^{2^n}$).
(In other words, we are exhibiting
klt varieties with ample canonical class that have
many vanishing plurigenera.) We know that there is some
upper bound for the bottom weight in each dimension,
by Hacon-M\textsuperscript{c}Kernan-Xu \cite[Theorem 1.3]{HMXacc}.

In particular, Theorem \ref{kampleglct} beats the examples
by Wang and me of klt varieties with ample canonical class
and large bottom weight
\cite[Remark 4.2]{TW}. The example here is not quasi-smooth. 

\begin{theorem}
\label{kampleglct}
For each even integer $n$ at least 4, let 
$a_{n+1}=\frac{1}{36}(20s_n^2-415s_n+161)$ and
$a_n=\frac{1}{36}(20s_n^2-175s_n+65)$.
Let $x=1+a_n+a_{n+1}$,
$d=(s_n-1)x$, and $a_i=d/s_i$ for $0\leq i\leq n-1$.
Then a general hypersurface $X$ of degree $d$ in
$\P^{n+1}(a_0,\ldots,a_{n+1})$ is well-formed and a klt variety
with $K_X=O_X(1)$. Its bottom weight is $a_{n+1}$, which is asymptotic
to $\frac{5}{9}s_n^2$.
\end{theorem}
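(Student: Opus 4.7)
(Theorem \ref{kampleglct}) The plan is to mirror the proof of Theorem \ref{fanoglct}, making the single sign change $x=1+a_n+a_{n+1}$ (in place of $-1+a_n+a_{n+1}$), so that $d-\sum a_i=1$ and hence $K_X=O_X(1)$ by adjunction. The outline has four steps: (i) integrality and small-prime congruences of $a_n,a_{n+1}$; (ii) well-formedness of $Y$ and $X$; (iii) kltness of a general $X$ via Theorem \ref{newton} in the two affine charts where quasi-smoothness fails; (iv) bottom weight.

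For (i) I would reduce the numerators $20s_n^2-415s_n+161$ and $20s_n^2-175s_n+65$ modulo $36=2^2\cdot 3^2$, using $s_n\equiv -1\pmod 8$ and $s_n\equiv -2\pmod 9$ for every even $n\geq 2$ (immediate from $s_{n+1}=s_n(s_n-1)+1$). This will show that $a_n,a_{n+1}$ are positive integers, both odd, with fixed nonzero residues modulo $3$ and $5$. For (ii), the three coprimality statements $\gcd(a_{n+1},x)=1$, $\gcd(a_n,x)=1$, and $\gcd(a_{n+1},a_n,s_n-1)=1$ (which suffice for well-formedness of $Y$ since $s_n-1=s_0\cdots s_{n-1}$ has its prime factors already dividing the appropriate $a_i$ with $i<n$) follow by the template of Theorem \ref{fanoglct}: any prime $p$ dividing one of these pairs must exceed $5$ by the residue data, forces $s_n$ to take a specific residue modulo $p$, and substituting this residue into the formula for $a_n$ or $a_{n+1}$ produces a unit modulo $p$, a contradiction. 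Well-formedness of $X$ is then automatic because its defining equation contains $x_0^2,\ldots,x_{n-1}^{s_{n-1}}$, which (together with $n\geq 4$) prevents $X$ from containing any positive-dimensional coordinate linear subspace along which $Y$ is singular.

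For (iii) I would identify the degree-$d$ monomials following the blueprint of Theorem \ref{fanoglct}: the pure monomials $x_i^{s_i}$ for $0\leq i\leq n-1$, together with two mixed monomials involving $x_n$ and $x_{n+1}$ whose exponents are determined by the degree-$d$ constraint. The particular integer coefficients $20,175,65,415,161$ in the formulas for $a_n,a_{n+1}$ are chosen precisely so that those mixed-monomial exponents are positive integers. Away from the curve $Z=\{x_0=\cdots=x_{n-1}=0\}$, the presence of these monomials makes a general $X$ quasi-smooth, hence klt. To handle the two remaining charts $\{x_n=1\}$ and $\{x_{n+1}=1\}$, I would apply Theorem \ref{newton}: normality is immediate because every pair of variables is omitted by some $x_i^{s_i}$, and the canonical-singularity criterion reduces to producing an interior point of the Newton polyhedron of the surviving monomials with all coordinates less than $1$. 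As in Theorem \ref{fanoglct}, such witnesses can be built using only $x_0^2$, $x_1^3$, $x_2^7$, and the mixed monomials, via small-$\epsilon$ convex combinations analogous to those displayed there. Hence $X$ is klt, normal by Serre's criterion, and adjunction gives $K_X=O_X(1)$ ample.

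For (iv): since $K_X=O_X(1)$ and $H^0(X,O_X(m))=H^0(Y,O_Y(m))$ for $0<m<d$, the bottom weight equals $\min(a_0,\ldots,a_{n+1})$. The formulas give $a_n-a_{n+1}=\tfrac{1}{3}(20s_n-8)>0$, so $a_{n+1}<a_n$, while $a_i=d/s_i$ for $i<n$ is of order $s_n^3/s_i$, much larger than $a_n\sim\tfrac{5}{9}s_n^2$. So the bottom weight is $a_{n+1}\sim\tfrac{5}{9}s_n^2$, attained by $x_{n+1}$. The main obstacle lies in the coprimality verification of (i)--(ii); the precise coefficients $20,175,65,415,161$ are engineered both to make the residue/substitution arguments go through (ruling out all potential prime obstructions) and simultaneously to make the mixed-monomial exponents in (iii) positive integers, so the heart of the proof is checking that these arithmetic miracles actually happen.
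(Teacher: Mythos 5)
Your proposal follows the same route the paper takes: the paper's proof is explicitly stated to be "completely parallel to that of Theorem \ref{fanoglct}", and your four-step outline (integrality and small-prime residues of $a_n,a_{n+1}$; well-formedness of $Y$ and $X$ via the same three gcd checks; kltness via Theorem \ref{newton} in the two singular charts; and the bottom-weight computation via $\min(a_0,\ldots,a_{n+1})=a_{n+1}$) matches the paper's strategy. The one concrete piece of information the paper supplies that you leave implicit is the precise form of the two mixed monomials, namely $x_1x_n^{13}x_{n+1}^{b}$ with $b=(4s_n-19)/3$ and $x_2\cdots x_{n-1}x_n^{c}x_{n+1}^{7}$ with $c=(5s_n-50)/3$; note that the positions of the small fixed exponents ($13$ on $x_n$, $7$ on $x_{n+1}$) are swapped relative to Theorem \ref{fanoglct}, so the explicit Newton-polyhedron witnesses in the charts $\{x_n=1\}$ and $\{x_{n+1}=1\}$ are not literally the same convex combinations as before and would need to be chosen afresh (for instance, in $\{x_{n+1}=1\}$ the monomial with small last exponent is now $x_2\cdots x_{n-1}x_n^c$, so one must pair $x_1x_n^{13}$ with a small coefficient against $x_0^2,x_1^3,x_2^7$, and similarly in the other chart). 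This is a detail, not a conceptual gap; your acknowledgment that the coefficients $20,175,65,415,161$ are engineered to make the mixed-monomial exponents integral is exactly the point.
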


The proof is completely
parallel to that of Theorem \ref{fanoglct} and hence is omitted.
The proof uses that the equation of $X$ includes
the monomials $0=x_0^2+\cdots+x_{n-1}^{s_{n-1}}+x_1x_n^{13}x_{n+1}^b
+x_2\cdots x_{n-1}x_n^cx_{n+1}^7$, where
$b=(4s_n-19)/3$ and $c=(5s_n-50)/3$.
Here $X$ is not quasi-smooth.

The variety
in Theorem \ref{kampleglct} should also have large
{\it global log canonical threshold }$\glct(X):=\lct(X,K_X)$.
(For varieties with ample canonical
class, this invariant was first studied
by J.~Chen, M.~Chen, and C.~Jiang \cite[section 2.5]{CCJ}.)
The global log canonical threshold
of a variety with ample canonical class is at most the bottom weight,
and it seems to be close
to the bottom weight when the bottom weight is large.

\begin{question}
For each even number $n$ at least 4,
does the variety in Theorem \ref{kampleglct}
have the largest bottom weight and the largest glct
among all klt projective $n$-folds with ample canonical class?
\end{question}

I speculate that the optimal examples in odd dimensions
will also have bottom weight asymptotic to $\frac{5}{9}s_n^2$.

The best examples I know in low dimensions are as follows.
I conjecture that the klt surface with ample canonical class
of largest bottom weight
and largest glct
is $X_{182}\subset \P^3(91,53,23,14)$;
this is a non-quasi-smooth hypersurface, apparently new.
By Brown and Kasprzyk's programs,
the largest bottom weight for a quasi-smooth hypersurface
with $K_X=O_X(1)$ is 13, which occurs for $X_{316}\subset
\P^3(158,101,43,13)$ and $X_{159}\subset \P^3(73,43,29,13)$
\cite{GRD}.

I conjecture that the klt 3-fold with ample canonical class
of largest bottom weight
and largest glct is another non-quasi-smooth hypersurface, introduced here:
$$X_{72954}\subset \P^4(36477,24318,10422,943,793),$$
with equation $0=x_0^2+x_1^3+x_2^7+x_0x_3^{37}x_4^2
+x_1x_2x_3x_4^{47}$.
The largest previously known bottom weight
of a klt 3-fold with ample canonical class
occurs for the quasi-smooth hypersurface
$$X_{18174}\subset
\P^4(8854,5889,2457,507,466)$$
\cite{GRD}.

Finally, the klt 4-fold with ample canonical class
in Theorem \ref{kampleglct}
has bottom weight 1793201.
The largest previously known bottom weight
of a klt 4-fold with ample canonical class is
1127113, which occurs for the quasi-smooth
hypersurface \cite[ID 534198]{GRD}.


\small \sc UCLA Mathematics Department, Box 951555,
Los Angeles, CA 90095-1555

totaro@math.ucla.edu
\end{document}